\definecolor{lightgray}{rgb}{0.9, 0.9, 0.9}
\definecolor{darkgray}{rgb}{0.7, 0.7, 0.7}
\definecolor{darkblue}{rgb}{0, 0, .4}
\newtheorem{theorem}{Theorem}[section]
\newtheorem{proposition}[theorem]{Proposition}
\newtheorem{conjecture}[theorem]{Conjecture}
\newtheorem{question}[theorem]{Question}
\newtheorem{problem}[theorem]{Problem}
\newcounter{todocounter}
\newfont{\footsc}{cmcsc10 at 8truept}
\newfont{\footbf}{cmbx10 at 8truept}
\newfont{\footrm}{cmr10 at 10truept}
\renewenvironment{abstract}%
		{
		  \begin{list}{}%
		     {\setlength{\rightmargin}{1in}%
		      \setlength{\leftmargin}{1in}}%
		   \item[]\ignorespaces\begin{small}}%
		 {\end{small}\unskip\end{list}}
\keywords{growth rate, permutation class, Stanley-Wilf limit}
\title{\sc{Permutation Classes of Every Growth Rate Above $2.48188$}}
\author{\sc{Vincent Vatter}\\
\small Department of Mathematics\\[-1pt]
\small Dartmouth College\\[-1pt]
\small Hanover, NH 03755\\[-10pt]}
\date{}
\begin{document}
\maketitle

\pagestyle{main}

\newcommand{\Av}{\operatorname{Av}}
\newcommand{\Age}{\operatorname{Age}}
\newcommand{\A}{\mathcal{A}}
\newcommand{\C}{\mathcal{C}}
\newcommand{\D}{\mathcal{D}}
\newcommand{\E}{\mathcal{E}}
\newcommand{\HH}{\mathcal{H}}
\newcommand{\I}{\mathcal{I}}
\newcommand{\J}{\mathcal{J}}
\newcommand{\K}{\mathcal{K}}
\renewcommand{\L}{\mathcal{L}}
\newcommand{\M}{\mathcal{M}}
\newcommand{\N}{\mathcal{N}}
\renewcommand{\P}{\mathcal{P}}
\newcommand{\R}{\mathcal{R}}
\renewcommand{\S}{\mathcal{S}}
\renewcommand{\O}{\mathcal{O}}
\newcommand{\W}{\mathcal{W}}
\newcommand{\gr}{\mathrm{gr}}
\newcommand{\ip}{\operatorname{ip}}
\newcommand{\lgr}{\underline{\gr}}
\newcommand{\ugr}{\overline{\gr}}
\newcommand{\Grid}{\operatorname{Grid}}
\newcommand{\zpm}{0/\mathord{\pm} 1}
\newcommand{\proj}{\operatorname{proj}}
\newcommand{\height}{\operatorname{ht}}
\newcommand{\hjuxta}[2]{\left[\begin{array}{cc}#1&#2\end{array}\right]}

\begin{abstract}
We prove that there are permutation classes (hereditary properties of permutations) of every growth rate (Stanley-Wilf limit) at least $\lambda\approx 2.48187$, the unique real root of $x^5-2x^4-2x^2-2x-1$, thereby establishing a conjecture of Albert and Linton.
\end{abstract}

\section{Introduction}

The permutation $\pi$ of length $n$ {\it contains\/} the permutation $\sigma$ of length $k$, written $\sigma\le\pi$, if $\pi$ has a subsequence of length $k$ in the same relative order as $\sigma$.  For example, $\pi=391867452$ (written in list, or one-line notation) contains $\sigma=51342$, as can be seen by considering the subsequence $91672$ ($=\pi(2),\pi(3),\pi(5),\pi(6),\pi(9)$).  We further say that $\pi$ {\it properly contains\/} $\sigma$ if $\sigma\le\pi$ and $\sigma\neq\pi$.  A permutation class is a downset (or hereditary property) of permutations under this order; thus if $\C$ is a permutation class, $\pi\in\C$, and $\sigma\le\pi$ then $\sigma\in\C$.

We denote by $\C_n$ the set of permutations in $\C$ of length $n$.  The Marcus-Tardos Theorem~\cite{marcus:excluded-permut:} (formerly the Stanley-Wilf Conjecture) states that all permutation classes other than the class of all permutations grow at at most exponential speed, i.e., that these classes have finite {\it upper growth rates\/},
$$
\ugr(\C)=\limsup_{n\rightarrow\infty} \sqrt[n]{|\C_n|}.
$$
When $\lim_{n\rightarrow\infty}\sqrt[n]{|\C_n|}$ exists, which it does for all classes in this paper, we call it the {\it growth rate\/} of $\C$ and denote it by $\gr(\C)$.

Although vast amounts of research had considered the growth rates of {\it principally based\/} permutation classes, i.e., those defined by not containing a single permutation, Kaiser and Klazar~\cite{kaiser:on-growth-rates:} were the first to study growth rates of permutation classes in full generality.  They showed that permutation classes of growth rate less than $2$ satisfy one of the following:
\begin{itemize}
\item $|C_n|$ is polynomial for large $n$, or
\item $F_{n,k}\le |C_n|\le n^c F_{n,k}$ holds for integers $c\ge 0$ and $k\ge 2$, where $F_{n,k}$ denotes the $k$-generalized Fibonacci numbers defined by $F_{n,k}=F_{n-1,k}+F_{n-2,k}+\cdots+F_{n-k,k}$.
\end{itemize}
It follows that the growth rates of permutation classes less than $2$ are precisely the positive roots of $x^{k+1}-x^k-\cdots-x^2-1$ for some $k$.  

Vatter~\cite{vatter:small-permutati:} then showed that there are only countably many permutation classes of growth rate less than $\kappa$, the unique real root of $x^3-2x^2-1$, approximately $2.20557$, while there are uncountably many permutation classes of growth rate $\kappa$.  That work also characterizes the possible growth rates between $2$ and $\kappa$, which are roots of one of the following four families of polynomials
\begin{itemize}
\item $(x^3-2x^2-1)x^{k+1}-x+3$,
\item $(x^3-2x^2-1)x^{k+2}-x^2+2x+1$,
\item $(x^3-2x^2-1)x^{k+\ell}+x^\ell+1$, and
\item $(x^3-2x^2-1)x^k+1$,
\end{itemize}
for integers $k,\ell\ge 0$.  Note that the set of these growth rates contains no accumulation points from above, but does contain countably many accumulation points from below, and these accumulation points themselves accumulate at $\kappa$.

Results such as these have also been proved for a variety of combinatorial structures, for surveys of which we refer to Bollob\'as~\cite{bollobas:hereditary-and:} and Klazar~\cite{klazar:overview-of-som}.  In particular, Balogh, Bollob\'as, and Morris~\cite{balogh:hereditary-prop:ordgraphs} extended Kaiser and Klazar's work to the more general setting of ordered graphs\footnote{\label{fn-ordered-graphs}%
Let $G$ and $H$ be graphs on $\{1,\dots,n\}$ and $\{1,\dots,k\}$, respectively.  We say that $H$ is an {\it ordered subgraph\/} of $G$ if there is an increasing injection $f:\{1,\dots,k\}\rightarrow\{1,\dots,n\}$ such that $i\sim_H j$ if and only if $f(i)\sim_G f(j)$.  For any permutation $\pi$ of length $n$ we can construct the ordered graph $G_\pi$ on the vertex set $\{1,\dots,n\}$ by taking $i\sim_{G_\pi} j$ for $i<j$ if and only if $\pi(i)>\pi(j)$.  It follows that $G_\pi$ contains $G_\sigma$ as an ordered subgraph if and only if $\sigma\le\pi$, so every permutation class can be viewed as a hereditary property of ordered graphs.  Thus in particular, the set of growth rates of permutation classes is contained in the set of growth rates of hereditary properties of ordered graphs.  The converse, however, is not true.  For example, Balogh, Bollob\'as, and Morris~\cite{balogh:hereditary-prop:ordgraphs} show that the largest real root of $x^5-x^4-x^3-x^2-2x-1$, approximately $2.03166$, is the growth rate of a hereditary property of ordered graphs (they conjecture that it is the smallest such growth rate above $2$), but the results of Vatter~\cite{vatter:small-permutati:} show that this number is not the growth rate of a permutation class.}
and conjectured both that the set of growth rates of ordered graphs contains no accumulation points from above, and that all such growth rates are integers or algebraic irrationals.  These conjectures were disproved by Albert and Linton~\cite{albert:growing-at-a-pe:}, who showed that the set of growth rates of permutation classes contains a perfect set (a set equal to its accumulation points, e.g., the middle thirds Cantor set).  Albert and Linton also conjectured that there is some $\lambda$ such that every real number at least $\lambda$ is the growth rate of a permutation class.  Our main theorem, below, resolves this conjecture.

\begin{theorem}\label{thm-gr-all-large}
Let $\lambda$ denote the unique real root of $x^5-2x^4-2x^2-2x-1$, approximately $2.48187$.  Every real number greater than $\lambda$ is the growth rate of a permutation class.
\end{theorem}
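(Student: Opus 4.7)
My plan is to realize each $\alpha > \lambda$ as the growth rate of a sum-closed permutation class. Recall that if $\mathcal{G}$ is a set of sum-indecomposable permutations that is closed downward within sum-indecomposables (whenever $\sigma \in \mathcal{G}$ and $\tau \le \sigma$ is sum-indecomposable, $\tau \in \mathcal{G}$), then the class $\mathcal{C}(\mathcal{G})$ of all direct sums of elements of $\mathcal{G}$ is a genuine permutation class, and its generating function is $1/(1-g(x))$, where $g(x) = \sum_{\sigma \in \mathcal{G}} x^{|\sigma|}$. Thus $\gr(\mathcal{C}(\mathcal{G})) = 1/\rho$, where $\rho$ is the smallest positive solution of $g(x) = 1$. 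Substituting $y = 1/x$ into $x^5 - 2x^4 - 2x^2 - 2x - 1$ gives $2y + 2y^3 + 2y^4 + y^5 = 1$ at $y = 1/\lambda$, which strongly suggests that the construction should begin with a ``spine'' of sum-indecomposables containing exactly $2, 0, 2, 2, 1$ elements in lengths $1$ through $5$, contributing exactly this polynomial to $g$.

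Starting from this spine, I would realize a universe $\mathcal{U}$ of sum-indecomposables inside a single ambient permutation class of upper growth rate $\lambda$, chosen so that $\mathcal{U}$ contains exponentially many members at each length (necessary for fine tuning). A natural candidate is a small juxtaposition or geometric grid class built around the spine, producing a structured family of sum-indecomposables closed under the order relation. Once $\mathcal{U}$ is set up, the task reduces to a targeting lemma: for every $\alpha > \lambda$, find a downward-closed subset $\mathcal{G} \subseteq \mathcal{U}$ with $g(1/\alpha) = 1$. The natural construction is greedy, adding elements of $\mathcal{U}$ in order of non-decreasing length as long as the partial sum stays below $1$. Since $|\mathcal{U}_n|$ grows like $\lambda^n$ while the resolution required is of order $\alpha^{-n}$ and $\alpha > \lambda$, the supply of candidates at each length far outstrips what is needed, providing enough flexibility to drive the partial sums to $1$ in the limit.

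The main obstacle is maintaining downward closure while greedily selecting. Dropping one sum-indecomposable may force dropping every element that contains it, so the greedy process has to act on downward-closed ``chunks'' rather than on individual permutations. I expect this to be manageable because a grid-class universe $\mathcal{U}$ carries a natural filtration by downward-closed subclasses, each contributing a controllable amount to $g$ and with the smallest chunks at each length having weight much less than $\alpha^{-n}$ times the number of length-$n$ indecomposables. Once $\mathcal{G}$ is found with $g(1/\alpha) = 1$, monotonicity of $g$ on $(0, 1/\lambda]$ forces $1/\alpha$ to be the least positive root of $g(x) = 1$, and standard generating-function asymptotics applied to $1/(1-g(x))$ give $\gr(\mathcal{C}(\mathcal{G})) = \alpha$ (not merely $\ugr(\mathcal{C}(\mathcal{G})) \ge \alpha$), completing the proof.
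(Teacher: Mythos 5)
Your proposal lands on the right framework (sum-closed classes, growth rate as reciprocal of the smallest positive root of $g(x)=1$, the substitution $y=1/x$ reading off the polynomial) and correctly identifies the central obstacle: maintaining downward closure of the set of sum indecomposables while fine-tuning it to hit a prescribed value of $g(1/\alpha)$. But it does not solve that obstacle, and the resolution you sketch is not the one that works. The paper's key structural idea is to construct explicit \emph{infinite antichains} $U^{\alpha,\beta}$ of sum indecomposable permutations (inflations of increasing oscillations). Because these are antichains, one may freely include or exclude any subset of them while the downward closure of the whole antichain contributes a fixed, eventually-constant sequence $(r_n)$; this decouples the selection problem from downward closure entirely, whereas your ``grid-class universe with a natural filtration by downward-closed chunks'' is left as an unsubstantiated expectation and is exactly the sort of thing that fails without the antichain structure (dropping one permutation can cascade).

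Two further issues. First, your quantitative heuristic is misdirected: you compare $|\mathcal{U}_n|\sim\lambda^n$ to the ``resolution'' $\alpha^{-n}$ and conclude there is ample slack, but if $\alpha>\lambda$ then $\lambda^n\alpha^{-n}\to 0$, so the \emph{total} available adjustment at length $n$ actually shrinks. What the paper's interval proposition (Proposition~\ref{prop-gr-intervals}) needs is not exponentially many choices but a \emph{constant} amount of independent slack, at least $b-1\ge\alpha-1$ extra indecomposables per length for all large lengths, so that the cumulative tail $\sum_{n\ge m}(b-1)\alpha^{-n}$ dominates $\alpha^{-m}$; this is exactly what the antichain supplies and exactly what a universe of growth rate $\lambda$ inside an ambient class \emph{cannot} supply when $\alpha>\lambda$. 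Second, even once one interval $[\lambda,\lambda+1]$ is obtained, your proposal gives no mechanism for reaching arbitrarily large $\alpha$; the paper closes this gap with horizontal juxtaposition (Proposition~\ref{prop-gr-juxta}), $\gr\bigl(\hjuxta{\C}{\Av(21)}\bigr)=\gr(\C)+1$, to translate the interval upward. In short: right frame, right obstacle named, but the antichain construction, the correct counting of slack, and the juxtaposition step are all missing, and these are the substance of the proof.
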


In the next section we establish the terminology and basic (but technical) analytic facts needed for the proof of Theorem~\ref{thm-gr-all-large}, which is carried out in Section~\ref{sec-proof-main-thm}.  Before that, we conclude the introduction with a number of open problems.

Theorem~\ref{thm-gr-all-large} together with the results of Vatter~\cite{vatter:small-permutati:} leave us tantalizingly close to the ultimate aim of this line of research:

\begin{problem}
Characterize all growth rates of permutation classes.
\end{problem}

It would also be interesting to know, in a very rough sense, how many growth rates there are between $\kappa$ and $\lambda$:

\begin{question}
What is the Lebesgue measure of the set of growth rates of permutation classes in $[\kappa,\lambda]$?
\end{question}

Several remarkable phenomenon occur in the interval $[\kappa,\lambda]$: the first accumulation point from above, the appearance of the first perfect set, and the appearance of the first interval.  We have expended considerable effort optimizing $\lambda$, which we conjecture is best possible:

\begin{conjecture}
The set of growth rates of permutation classes below $\lambda$ is nowhere dense.
\end{conjecture}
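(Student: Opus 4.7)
The plan is to establish the conjecture by proving a structural classification of all permutation classes with growth rate below $\lambda$, and then showing that the growth rates arising from each structural family form a nowhere dense set. Below $\kappa$ this is already known from Kaiser--Klazar and Vatter~\cite{vatter:small-permutati:}, where the growth rates are even countable and lie in an explicit discrete family; so the real content is in the interval $[\kappa,\lambda]$, which contains Albert and Linton's perfect set of growth rates. Since a nowhere dense set may be perfect (e.g.\ a Cantor set), these constructions do not a priori contradict the conjecture---they merely show that the classification cannot be discrete.

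The first step would be to identify the structural templates. I would conjecture that every permutation class $\C$ with $\gr(\C)<\lambda$ embeds into a geometric grid class of bounded dimension, or more generally admits a substitution decomposition with simples drawn from a structurally restricted family (resembling the ``monotone grid plus pin sequences'' picture that controls classes of low growth). The intermediate thresholds should again correspond to polynomial roots such as the ones listed for $[2,\kappa]$: the next natural obstruction above $\kappa$ is the growth rate of the class obtained by forcing one extra ``oscillating pin'' into a $2\times 2$ gridded structure. A careful inductive argument, mirroring the analysis used for $[2,\kappa]$ but pushed one grid-level higher, should yield a finite list of structural types whose union contains all classes of growth rate below~$\lambda$.

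The second step would be to show that, within each structural type, the set of realisable growth rates is nowhere dense. For substitution-closed classes built from finitely many generators, standard singularity analysis forces the growth rate to be a root of a specific algebraic equation depending only on the generators, which gives countably many growth rates per template and hence a nowhere dense (indeed countable) contribution. For the families producing the Albert--Linton perfect set, one would analyse their construction explicitly, observing that it is essentially built by iterated substitutions with parameters controlled by a Cantor-like set of choices; the set of associated growth rates is then the image of a Cantor set under a Lipschitz map, hence nowhere dense. Taking the (hopefully countable) union of all template contributions and invoking the fact that a countable union of nowhere dense sets in $\mathbb R$ need not be nowhere dense but can be shown to be so when each component is contained in a common ``skeleton'' of algebraic values, one concludes the conjecture.

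The main obstacle is the first step: producing a complete structural classification below $\lambda$. Unlike the situation below $\kappa$, where a class is essentially a grid class plus bounded decorations, the classes of growth rate in $[\kappa,\lambda]$ include intricate infinite antichains and perfect-set constructions, and no clean grid-style description is currently available. Any path to the conjecture will need to prove that $\lambda$ is the exact threshold at which the ``grid class plus simple decorations'' picture breaks down; equivalently, that any class escaping this picture must already have growth rate at least $\lambda$. This matching of the analytic threshold of Theorem~\ref{thm-gr-all-large} to a structural threshold is the deep part of the problem and is precisely why the result is presented here only as a conjecture.
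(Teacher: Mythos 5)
This statement is a conjecture in the paper; the author offers no proof of it, and indeed frames it as the outcome of ``considerable effort optimizing $\lambda$'' rather than as a theorem. Your proposal is likewise not a proof but a research program, and you are candid that its central step---a complete structural classification of all classes with growth rate below $\lambda$---is open. That is the right diagnosis of where the difficulty lies, but it means there is no argument to evaluate for the main step: you assert that such a classification ``should'' exist and ``should'' mirror the analysis on $[2,\kappa]$, without identifying the structural templates or explaining why $\lambda$ (the root of $x^5-2x^4-2x^2-2x-1$) rather than some other threshold is where the picture breaks down. Matching that analytic constant to a structural dichotomy is the entire content of the conjecture.

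There are also two concrete flaws in your second step. First, your claim that the Albert--Linton-type families yield growth-rate sets that are ``the image of a Cantor set under a Lipschitz map, hence nowhere dense'' is contradicted by the paper's own machinery: Proposition~\ref{prop-gr-intervals} shows that exactly such parameterized families of sequences $(s_n)$ with $(r_n)\preceq(s_n)\preceq(t_n)$ produce \emph{full intervals} of growth rates once the gap $t_n-r_n$ is large enough relative to the growth rate, and Propositions~\ref{prop-gr-interval1} and~\ref{prop-gr-interval2} realize this with permutation classes starting at $\lambda$ itself. The map from choice sequences to growth rates is only H\"older (via Proposition~\ref{prop-gr-difference}), not Lipschitz in any sense that controls measure or category, so nowhere-density of each family's contribution cannot be taken for granted; it must be proved that below $\lambda$ the available antichains are never flexible enough to trigger Proposition~\ref{prop-gr-intervals}. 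Second, your concluding appeal to a countable union of nowhere dense sets being nowhere dense ``when contained in a common skeleton of algebraic values'' is not a valid principle: a countable union of nowhere dense sets is merely meager and can be dense (e.g.\ the rationals, which are algebraic). Some uniform separation or closedness of the union would have to be established, and nothing in the proposal supplies it.
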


\section{Sums of Permutations and Series of the Form $1/\left(1-\sum s_nx^n\right)$}

Given two permutations $\pi$ and $\sigma$ of respective lengths $n$ and $k$, their {\it direct sum\/} (or simply {\it sum\/}), $\pi\oplus\sigma$, is the permutation of length $n+k$ which consists of $\pi$ followed by a shifted copy of $\sigma$:
$$
(\pi\oplus\sigma)(i)=\left\{
\begin{array}{ll}
\pi(i)&\mbox{for $i\le n$,}\\
\sigma(i-n)+n&\mbox{for $n+1\le i\le n+k$.}
\end{array}
\right.
$$
A class $\C$ is said to be {\it sum closed\/} if $\pi\oplus\sigma\in\C$ whenever $\pi,\sigma\in\C$.  The classes we use to prove Theorem~\ref{thm-gr-all-large} are all sum closed and so we begin by recalling a few simple facts about these classes.

A permutation is said to be {\it sum indecomposable\/} (or {\it connected\/} or simply {\it indecomposable\/}) if it cannot be written as the direct sum of two shorter permutations.  Note that every permutation has a unique representation as a sum of sum indecomposable permutations.

\begin{proposition}\label{prop-gr-sums}
For all $n\ge 1$, let $s_n$ denote the number of sum indecomposable permutations in the sum closed class $\C$.  Then the generating function for $\C$ is $1/\left(1-\sum s_nx^n\right)$.
\end{proposition}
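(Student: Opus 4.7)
The plan is to use the standard symbolic/combinatorial-species identity for sequences. The proof rests on two facts: (i) every permutation admits a unique decomposition into sum indecomposables, and (ii) because $\C$ is a sum closed downset, $\pi \in \C$ if and only if every sum indecomposable summand of $\pi$ lies in $\C$.

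First I would verify the unique decomposition. Given $\pi$ of length $n$, I would define the decomposition greedily: let $j_1$ be the smallest index such that $\{\pi(1),\dots,\pi(j_1)\} = \{1,\dots,j_1\}$, let $\pi_1 = \pi(1)\cdots\pi(j_1)$, and iterate on the suffix (flattened). By minimality of each $j_i$, the resulting $\pi_1,\pi_2,\dots,\pi_k$ are sum indecomposable and $\pi = \pi_1\oplus\cdots\oplus\pi_k$. Uniqueness follows because in any such decomposition the first summand must occupy exactly an initial segment of positions whose values form an initial segment of $\{1,\dots,n\}$, and indecomposability forces that segment to be the shortest such, pinning down $\pi_1$; induction on the remainder finishes.

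Next I would observe that $\pi\in\C$ iff each $\pi_i\in\C$. The forward direction is immediate since $\pi_i\le\pi$ and $\C$ is a downset. The reverse direction is an induction on $k$ using that $\C$ is sum closed: if $\pi_1,\dots,\pi_k\in\C$ then $\pi_1\oplus\cdots\oplus\pi_{k-1}\in\C$ by induction, and one more application of sum closedness gives $\pi\in\C$.

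Combining these, members of $\C$ of length $n$ are in bijection with finite (possibly empty) sequences of sum indecomposable members of $\C$ whose lengths sum to $n$; here the empty sequence corresponds to the unique permutation of length $0$. Writing $S(x)=\sum_{n\ge 1}s_nx^n$, the generating function for length-$k$ sequences is $S(x)^k$, and summing over $k\ge 0$ gives
\[
\sum_{k\ge 0}S(x)^k=\frac{1}{1-S(x)},
\]
which is the claimed formula. I do not anticipate any real obstacle; the only subtle point is confirming that unique decomposition transports through the $\C$-membership equivalence, and this is handled by the downset/sum-closed argument above.
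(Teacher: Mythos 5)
Your proof is correct and follows essentially the same approach as the paper: permutations in $\C$ correspond bijectively to sequences of sum indecomposable members of $\C$, giving the geometric series $\sum_{k\ge 0}S(x)^k = 1/(1-S(x))$. You simply spell out in more detail the unique-decomposition lemma and the fact that $\C$-membership transfers to the summands, both of which the paper treats as known.
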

\begin{proof}
The permutations in $\C$ are in bijection with sequences of nonempty sum indecomposable permutations in $\C$.  Thus the generating function for $\C$ is $1+\left(s_1x+s_2x^2+\cdots\right)+\left(s_1x+s_2x^2+\cdots\right)^2+\cdots=1/\left(1-\sum s_nx^n\right)$.
\end{proof}


Since the classes we use to prove the main result are sum closed, Proposition~\ref{prop-gr-sums} shows that we may limit our attention to series of the form $1/\left(1-\sum s_nx^n\right)$, thinking of $(s_n)$ as the sequence enumerating sum indecomposables in a sum closed permutation class.  The remainder of this section is devoted to studying generating functions of this form, and our main result, Proposition~\ref{prop-gr-intervals}, provides conditions on when the growth rates of a collection of sum closed permutation classes form an interval.  In the following section, we construct sum closed classes whose sequences of sum indecomposable elements satisfy the hypotheses of this proposition, thereby proving Theorem~\ref{thm-gr-all-large}.

First we extend the notion of growth rates to generating functions, by defining
\begin{eqnarray*}
\ugr\left(\sum a_nx^n\right)&=&\limsup_{n\rightarrow\infty}\sqrt[n]{a_n}\mbox{ and}\\
\gr\left(\sum a_nx^n\right)&=&\lim_{n\rightarrow\infty}\sqrt[n]{a_n},
\end{eqnarray*}
when this limit exists (which we will always be the case for the series we consider).  First recall the following fact.

\newtheorem*{pringsheimsthm}{Pringsheim's Theorem}
\begin{pringsheimsthm}[see Flajolet and Sedgewick~{\cite[Section IV.3]{flajolet:analytic-combin:}}]
For any sequence $(a_n)$ of nonnegative numbers, the upper growth rate of $\sum a_nx^n$ is equal to the reciprocal of its smallest positive pole.
\end{pringsheimsthm}

Our next two propositions can be found (in slightly different forms) in Albert and Linton's work~\cite{albert:growing-at-a-pe:}.

\begin{proposition}\label{prop-gr-seq}
For any sequence $(s_n)_{n\ge 1}$ of bounded positive integers, the growth rate of $1/(1-\sum s_nx^n)$ exists and is equal to the unique positive solution of $\sum s_nx^{-n}=1$.
\end{proposition}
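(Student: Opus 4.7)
Set $S(x) = \sum_{n \ge 1} s_n x^n$. Because the $s_n$ are bounded and all $\ge 1$, the radius of convergence of $S$ is exactly $1$, and on $(0,1)$ the function $S$ is continuous and strictly increasing from $0$ to $+\infty$ (the limit at $1$ follows from $s_n \ge 1$ by comparison with the geometric series). Substituting $x \mapsto 1/x$, the function $T(x) := \sum s_n x^{-n}$ is continuous and strictly decreasing on $(1,\infty)$ from $+\infty$ to $0$, which establishes both the existence and uniqueness of the positive root $r$ of $\sum s_n x^{-n} = 1$, and also shows that $r > 1$.

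It remains to identify $r$ with the growth rate of $f(x) = 1/(1-S(x))$. I would first compute the upper growth rate via Pringsheim's Theorem. Since $S$ is analytic on the open unit disk, $f$ is meromorphic there, and for any zero $x_0$ of $1-S$ inside the disk, the triangle inequality gives $1 = |S(x_0)| \le S(|x_0|)$, so the monotonicity of $S$ on $(0,1)$ forces $|x_0| \ge 1/r$. Thus the smallest positive singularity of $f$ is exactly $1/r$, and Pringsheim's Theorem yields $\ugr(f) = r$.

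To upgrade $\ugr$ to $\gr$ I would show that $1/r$ is the \emph{only} singularity of $f$ on the circle $|x| = 1/r$. Suppose $|x_0| = 1/r$ and $S(x_0) = 1$; then $|S(x_0)| = S(|x_0|)$, and the equality case of the triangle inequality forces the complex numbers $s_n x_0^n$ to lie on a common ray through the origin. Because $s_1$ and $s_2$ are both strictly positive, $x_0$ and $x_0^2$ must lie on a common ray, so $x_0$ is a positive real and therefore $x_0 = 1/r$. The pole at $1/r$ is simple since $S'(1/r) = \sum n s_n r^{-(n-1)} > 0$, so subtracting the principal part leaves a function analytic in a slightly larger disk, giving $[x^n]f(x) = c r^n (1 + o(1))$ for some $c > 0$ and hence $\gr(f) = r$. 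The genuinely delicate step is this last one: it uses the hypothesis that the $s_n$ are positive integers (not merely nonnegative) via the nonvanishing of $s_1$ and $s_2$, whose coprime indices rule out the periodic support pattern that could otherwise produce additional singularities on the circle $|x|=1/r$ and cause $\sqrt[n]{a_n}$ to oscillate.
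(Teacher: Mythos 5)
Your proof is correct, but it reaches the conclusion by a genuinely different route than the paper. The two arguments agree on the easy parts---existence and uniqueness of the positive root $r$ via monotonicity of $\sum s_n x^{-n}$, and locating $\ugr$ at $r$ via Pringsheim's Theorem---but they diverge completely on the hard point, namely that $\lim\sqrt[n]{a_n}$ exists rather than merely $\limsup$. The paper argues combinatorially: it interprets $1/(1-\sum s_n x^n)$ as the weight generating function for words over an alphabet with $s_n$ letters of weight $n$, observes that concatenation gives an injection proving $a_{m+n}\ge a_m a_n$, notes that $s_1\ge 1$ forces $a_n\ge 1$, and invokes Fekete's lemma on the superadditive sequence $\log a_n$. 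You argue analytically: you show the dominant singularity is unique on $|x|=1/r$ via the equality case of the triangle inequality (using $s_1,s_2>0$ and $\gcd(1,2)=1$ to rule out rotational symmetry), that the pole there is simple because $S'(1/r)>0$, and that subtracting the principal part leaves a function analytic in a strictly larger disk (this last step silently uses that zeros of $1-S$ are isolated in the open unit disk, so none accumulate on the compact circle $|x|=1/r$; worth a sentence). Your approach requires heavier machinery but delivers a strictly stronger conclusion, the asymptotic $a_n\sim cr^n$, whereas the paper's Fekete argument gives only the bare existence of the limit. The paper's route is shorter and elementary; yours is the one to reach for when finer asymptotic information about the coefficients is wanted.
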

\begin{proof}
First we prove that the limit exists.  Consider an alphabet $A$ containing, for each $n$, $s_n$ letters of weight $x^n$.  Then the generating function counting words over $A$ by weight is $1/(1-\sum s_nx^n)$; suppose that this is equal to $\sum a_nx^n$.  The concatenation operation gives an injection from pairs of words of weight $m$ and $n$ to words of weight $m+n$, proving that $a_{m+n}\ge a_ma_n$.  Because $s_1\ge 1$, we have that $a_n\ge 1$ for all $n$, so $\log a_n$ exists for all $n$ and is superadditive.  Fekete's lemma then implies that
$$
\lim_{n\rightarrow\infty} \sqrt[n]{a_n}=e^{\lim\log{a_n}/n}=e^{\sup \log{a_n}/n}.
$$
(This part of the proof is essentially due to Arratia~\cite{arratia:on-the-stanley-:}.)

Now let $\psi$ denote the growth rate of $1/\left(1-\sum s_nx^n\right)$.  By Pringsheim's Theorem $\psi$ is the reciprocal of the smallest positive pole of this series, and because $(s_n)$ is bounded, the radius of convergence of $\sum s_nx^n$ is $1$.  Thus the smallest pole of $1/(1-\sum s_nx^n)$ is the least positive solution of $\sum s_nx^n=1$, making $\psi$ the greatest positive solution of $\sum s_nx^{-n}=1$.  Finally, note that $\sum s_nx^{-n}$ is decreasing for positive $x$, so $\psi$ is actually the unique positive solution of $\sum s_nx^{-n}=1$.
\end{proof}

In particular, since $\displaystyle\sum_{n\ge 1}\frac{c}{(c+1)^n}=1$, the growth rate of $\displaystyle\frac{1}{1-\sum_{n\ge 1} cx^n}$ is $c+1$.

\begin{proposition}\label{prop-gr-difference}
Fix $\epsilon>0$ and $c$ a positive integer.  There is a constant $m=m(\epsilon,c)$ such that if two sequences $(r_n)_{n\ge 1}$ and $(s_n)_{n\ge 1}$ of positive integers each at most $c$ agree up to their $m$th terms then the growth rates of $1/\left(1-\sum r_nx^n\right)$ and $1/\left(1-\sum s_nx^n\right)$ are within $\epsilon$ of each other.
\end{proposition}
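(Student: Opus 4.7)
The plan is to leverage Proposition~\ref{prop-gr-seq}, which identifies the growth rates $\psi$ and $\phi$ of $1/(1-\sum s_nx^n)$ and $1/(1-\sum r_nx^n)$ as the unique positive solutions of $f(x) := \sum s_nx^{-n} = 1$ and $g(x) := \sum r_nx^{-n} = 1$, respectively. The whole argument then reduces to a quantitative implicit-function-style estimate: if $f$ and $g$ are uniformly close on an interval containing both roots, and if $g'$ is bounded away from zero on that interval, then $\psi$ and $\phi$ must be close.

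First I would localize both roots inside a compact interval depending only on $c$. Because $s_n \ge 1$, we have $f(2) \ge \sum_{n \ge 1} 2^{-n} = 1$, and because $s_n \le c$, we have $f(c+1) \le c \sum_{n \ge 1}(c+1)^{-n} = 1$; since $f$ is strictly decreasing on $(1,\infty)$, these bounds force $\psi \in [2, c+1]$, and the identical argument shows $\phi \in [2, c+1]$. Next, if $r_n = s_n$ for all $n \le m$, the geometric tail estimate
$$|f(x) - g(x)| \;\le\; \sum_{n > m} c\, x^{-n} \;=\; \frac{c\,x^{-m}}{x-1} \;\le\; c \cdot 2^{-m}$$
holds uniformly for $x \ge 2$. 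Finally, since $r_1 \ge 1$, the term-by-term lower bound $|g'(x)| \ge x^{-2} \ge (c+1)^{-2}$ holds throughout $[2, c+1]$, so the Mean Value Theorem applied to $g$ on the subinterval with endpoints $\psi$ and $\phi$, combined with $g(\phi) = 1 = f(\psi)$, yields
$$|\psi - \phi| \;\le\; (c+1)^2\,|g(\psi) - f(\psi)| \;\le\; c\,(c+1)^2 \cdot 2^{-m}.$$
Taking $m = m(\epsilon, c)$ large enough that the right-hand side is below $\epsilon$---for instance $m = \lceil \log_2(c(c+1)^2/\epsilon) \rceil + 1$---completes the proof.

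I do not anticipate a genuine obstacle. The only point that requires care is ensuring that the function-difference bound and the derivative lower bound are both uniform on an interval simultaneously containing $\psi$ and $\phi$, and the elementary universal bounds $2 \le \psi, \phi \le c+1$ together with $r_1 \ge 1$ supply exactly what is needed.
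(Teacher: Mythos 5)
Your proof is correct and rests on the same three ingredients as the paper's: uniform localization of both roots in $[2,c+1]$, a geometric tail bound on $|f-g|$, and a quantitative modulus allowing the function-value estimate to be transferred to a root estimate. The only cosmetic difference is that you invoke the Mean Value Theorem with the derivative bound $|g'(x)|\ge x^{-2}\ge (c+1)^{-2}$, whereas the paper extracts the same $(c+1)^{-2}$ factor by hand from the single $n=1$ term of $\sum_{n=1}^m r_n(\rho^{-n}-\psi^{-n})$ using positivity of all summands.
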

\begin{proof}
Let $\rho=\gr\left(1/\left(1-\sum r_nx^n\right)\right)$, $\psi=\gr\left(1/\left(1-\sum s_nx^n\right)\right)$, and suppose that $\rho<\psi$.  As $1\le r_n,s_n\le c$, we have that
$$
2\le \rho<\psi\le\gr\left(\frac{1}{1-\sum cx^n}\right)=c+1,
$$
and so we may choose $m$ such that
$$
\left|\sum_{n\ge m+1} s_n\psi^{-n}-\sum_{n\ge m+1}r_n\rho^{-n}\right|<\epsilon/(c+1)^2.
$$
By Proposition~\ref{prop-gr-seq}, we see that
$$
\sum_{n\ge 1}r_n\rho^{-n}-\sum_{n\ge 1}s_n\psi^{-n}=1-1=0,
$$
so if $(r_n)$ and $(s_n)$ agree up to their $m$th terms then
$$
\sum_{n=1}^m r_n(\rho^{-n}-\psi^{-n})
=
\sum_{n\ge m+1} s_n\psi^{-n}-\sum_{n\ge m+1}r_n\rho^{-n}.
$$
Because $\rho<\psi$, the sum in the left-hand side of the above equation contains only positive terms.  Thus we have in particular that $\rho^{-1}-\psi^{-1}<\epsilon/(c+1)^2$, implying that $\psi-\rho<\epsilon\rho\psi/(c+1)^2<\epsilon$, as desired.
\end{proof}

Our next result shows how to construct intervals of growth rates given sufficient flexibility in sum indecomposable sequences.  In it we write $(r_n)\preceq (t_n)$ if $(r_n)$ {\it is dominated by\/} $(t_n)$, i.e., if $r_n\le t_n$ for all $n\ge 1$.

\begin{proposition}\label{prop-gr-intervals}
Let $(r_n)\preceq (t_n)$ be bounded sequences of positive integers such that $t_n=r_n$ for all $n<k$ and $t_n-r_n\ge b-1$ for all $n\ge k$ (for some $k$).  Then every real number between
\begin{itemize}
\item $\gr\left(1/\left(1-\sum r_nx^n\right)\right)$ and
\item $\min\{b, \gr\left(1/\left(1-\sum t_nx^n\right)\right)\}$
\end{itemize}
is equal to $\gr\left(1/\left(1-\sum s_nx^n\right)\right)$ for some sequence $(s_n)$ of positive integers satisfying $(r_n)\preceq (s_n)\preceq (t_n)$.
\end{proposition}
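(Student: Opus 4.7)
The plan is to reformulate the requirement ``growth rate equals $\phi$'' as a base-$\phi$ expansion condition and then build the expansion greedily. Fix $\phi$ with $\gr(1/(1-\sum r_nx^n)) \le \phi \le \min\{b,\, \gr(1/(1-\sum t_nx^n))\}$. By Proposition~\ref{prop-gr-seq}, a bounded positive integer sequence $(s_n)$ has growth rate exactly $\phi$ if and only if $\sum s_n \phi^{-n} = 1$. Writing $s_n = r_n + c_n$ with $c_n \in \{0, 1, \ldots, t_n - r_n\}$ and setting $\delta = 1 - \sum r_n \phi^{-n}$, the task becomes: find integers $c_n$ in the allowed range with $\sum c_n \phi^{-n} = \delta$. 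The two hypotheses on $\phi$ together with Proposition~\ref{prop-gr-seq} translate to $0 \le \delta \le \sum_{n \ge k}(t_n - r_n)\phi^{-n}$, so the problem is feasible over the reals; the content is to realize $\delta$ with integer coefficients.

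Define $(c_n)$ greedily by $c_n = 0$ for $n < k$ and, for $n \ge k$,
$$
c_n \;=\; \min\!\left(t_n - r_n,\; \left\lfloor \phi^n\Bigl(\delta - \sum_{m<n} c_m\phi^{-m}\Bigr)\right\rfloor\right).
$$
Writing $A_n = \sum_{m<n} c_m \phi^{-m}$, I would prove by induction on $n \ge k$ two invariants: \emph{validity} $A_n \le \delta$, and \emph{feasibility} $A_n + \sum_{m \ge n}(t_m - r_m)\phi^{-m} \ge \delta$. Validity is immediate: by construction $c_n \phi^{-n} \le \delta - A_n$. For feasibility, the base case $n = k$ is the inequality $\sum t_m \phi^{-m} \ge 1$, which follows from $\phi \le \gr(1/(1-\sum t_n x^n))$ via Proposition~\ref{prop-gr-seq}. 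For the inductive step, if the min selects $t_n - r_n$ the invariant is preserved trivially; otherwise $c_n = \lfloor \phi^n(\delta - A_n)\rfloor$, so $c_n \phi^{-n} > (\delta - A_n) - \phi^{-n}$, and the step reduces to the single inequality $\sum_{m \ge n+1}(t_m - r_m)\phi^{-m} \ge \phi^{-n}$. Since $t_m - r_m \ge b-1$ for all $m \ge k$, the left side is at least $(b-1)\phi^{-n}/(\phi-1)$, and the hypothesis $\phi \le b$ gives $(b-1)/(\phi-1) \ge 1$, closing the induction.

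Combining the invariants yields $\delta - \sum_{m \ge n}(t_m - r_m)\phi^{-m} \le A_n \le \delta$; boundedness of $(t_n)$ together with $\phi > 1$ forces the tail to vanish, so $A_n \to \delta$ and thus $\sum c_n \phi^{-n} = \delta$. Setting $s_n = r_n + c_n$ then produces a sequence with $(r_n) \preceq (s_n) \preceq (t_n)$ whose growth rate is exactly $\phi$ by Proposition~\ref{prop-gr-seq}. The main obstacle is the feasibility step: the constant $b$ appearing in the hypothesis is calibrated precisely so that the tail $\sum_{m \ge n+1}(t_m - r_m)\phi^{-m}$ can absorb the rounding loss $\phi^{-n}$ introduced by the floor at each greedy round. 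Without either $t_n - r_n \ge b-1$ or $\phi \le b$, this lone estimate would fail and the greedy expansion would overshoot or fall short of $\delta$.
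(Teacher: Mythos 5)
Your proof is correct, and the core mechanism --- a greedy base-$\phi$ expansion, choosing at each index the largest permissible increment $c_n$ that does not overshoot the target $\delta=1-\sum r_n\phi^{-n}$ --- coincides with the paper's construction of $s_n$ as the greatest value between $r_n$ and $t_n$ keeping the growth rate of $1/\bigl(1-s_1x-\cdots-s_nx^n-r_{n+1}x^{n+1}-\cdots\bigr)$ at most $\gamma$; unwinding the definitions shows these are the same choice rule. Both arguments also pivot on the same key estimate, that the geometric tail $\sum_{m>n}(t_m-r_m)\phi^{-m}\ge(b-1)\phi^{-n}/(\phi-1)\ge\phi^{-n}$ can absorb the rounding loss at each step, which is exactly where $t_m-r_m\ge b-1$ and $\phi\le b$ are used. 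The genuine difference is in how the proof closes. The paper proceeds by contradiction: assume $\psi<\gamma$, invoke Proposition~\ref{prop-gr-difference} to force $s_n=t_n$ for all large $n$, locate the last index $m$ with $s_m<t_m$, and apply the key estimate once there to contradict maximality. You instead maintain the two invariants $A_n\le\delta$ and $A_n+\sum_{m\ge n}(t_m-r_m)\phi^{-m}\ge\delta$ throughout the induction and squeeze $A_n\to\delta$ directly. Your route is self-contained (it never needs Proposition~\ref{prop-gr-difference}) and exhibits the convergence explicitly rather than inferring it indirectly; the paper's route isolates the key inequality at a single carefully chosen index at the cost of the detour through the continuity lemma. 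Both are sound; yours is arguably the cleaner exposition of the same underlying idea.

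Two small points to tighten if you write this up: note explicitly that $\phi>1$ (indeed $\phi\ge 2$ since $r_1\ge 1$), which is what makes the tails geometric and vanishing; and observe that $(s_n)$ is a bounded sequence of positive integers (bounded above by $\max t_n$, below by $r_n\ge 1$), which is the hypothesis needed to invoke Proposition~\ref{prop-gr-seq} at the end.
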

\begin{proof}
Let $\rho=\gr\left(1/\left(1-\sum r_nx^n\right)\right)$, $\tau=\gr\left(1/\left(1-\sum t_nx^n\right)\right)$, and choose $\gamma$ between $\rho$ and $\min\{b,\tau\}$.

For each $n$ choose $s_n$ to be the greatest integer between $r_n$ and $t_n$ such that the growth rate of
$$
\frac{1}{1-s_1x-\cdots-s_nx^n-r_{n+1}x^{n+1}-r_{n+2}x^{n+2}-\cdots}
$$
is at most $\gamma$.  If follows that $\psi=\gr\left(1/\left(1-\sum s_nx^n\right)\right)\le\gamma$; we seek to prove that it is precisely $\gamma$.

Suppose to the contrary that this does not hold, so $\psi=\gamma-\epsilon$ for some $\epsilon>0$.  By Proposition~\ref{prop-gr-difference}, there is an $m$ such that
$$
\left|\gr\left(\frac{1}{1-s_1x-\cdots-s_mx^m-t_{m+1}x^{m+1}-t_{m+2}x^{m+2}-\cdots}\right)-\psi\right|<\epsilon,
$$
so by our choice of $(s_n)$, $s_n=t_n$ for all sufficiently large $n$.  If $s_n=t_n$ for all $n$ then we have a contradiction, as $\gamma=\psi=\tau$.  Thus we may choose $m>k$ minimal so that $s_n=t_n$ for all $n>m$ (and $s_m<t_m$).  Thus we have, from the definition of $(s_n)$, that
$$
\sum_{n\ge 1} s_n\gamma^{-n}
=
\sum_{n=1}^m s_n\gamma^{-n}
+
\sum_{n\ge m+1} t_n\gamma^{-n}
< 1.
$$
However, because $\gamma\le b$,
$$
\gamma^{-m}
\le
\frac{(b-1)\gamma^{-m-1}}{1-\gamma^{-1}}
=
\sum_{i\ge m+1} (b-1)\gamma^{-m}
\le
\sum_{i\ge m+1} (t_m-r_m)\gamma^{-m},
$$
and thus
$$
\sum_{n=1}^m s_n\gamma^{-n}
+
\gamma^{-m}
+
\sum_{n\ge m+1} r_i\gamma^{-n}
<1,
$$
so the growth rate of
$$
\frac{1}{1-s_1x-\cdots-s_{k-1}x^k-(s_k+1)x^k-r_{k+1}x^{k+1}-r_{k+2}t^{k+2}-\cdots}
$$
is less than $\gamma$, contradicting our choice of $s_k$ and completing the proof.
\end{proof}

We conclude this section by noting that perfect sets of growth rates can be constructed with much weaker hypotheses:

\begin{proposition}\label{prop-gr-perfect}
Let $(r_n)\preceq (t_n)$ be bounded sequences of positive integers such that $r_n\neq t_n$ for infinitely many values of $n$.  Then the set $S$ of growth rates of series of the form $1/\left(1-\sum s_nx^n\right)$ for sequences $(s_n)$ of positive integers satisfying $(r_n)\preceq(s_n)\preceq(t_n)$ is perfect.
\end{proposition}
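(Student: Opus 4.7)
The plan is to verify the two defining properties of a perfect set: $S$ is closed, and every point of $S$ is a limit point.

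For closedness, suppose $\gamma_i \in S$ converges to $\gamma$, where each $\gamma_i$ is the growth rate of $1/(1-\sum s_n^{(i)} x^n)$ for some sequence $(s_n^{(i)})$ with $(r_n)\preceq(s_n^{(i)})\preceq(t_n)$. Since $(t_n)$ is bounded, each entry $s_n^{(i)}$ takes only finitely many possible values, so a diagonal extraction produces a subsequence along which $(s_n^{(i)})$ converges pointwise to some sequence $(s_n^\ast)$, and this sequence still satisfies $(r_n)\preceq(s_n^\ast)\preceq(t_n)$. Given $\epsilon>0$, let $m=m(\epsilon,c)$ from Proposition~\ref{prop-gr-difference} (with $c$ a bound on $(t_n)$). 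For all sufficiently large $i$ in our subsequence, $s_n^{(i)}=s_n^\ast$ for $1\le n\le m$, and then Proposition~\ref{prop-gr-difference} forces the growth rate of $1/(1-\sum s_n^\ast x^n)$ to differ from $\gamma_i$ by at most $\epsilon$. Letting $i\to\infty$ and then $\epsilon\to 0$ shows this growth rate equals $\gamma$, placing $\gamma\in S$.

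For the absence of isolated points, fix $\gamma\in S$ arising from $(s_n)$ and fix $\epsilon>0$. With $m=m(\epsilon,c)$ as above, choose some index $N>m$ such that $r_N<t_N$; this is possible because $(r_n)$ and $(t_n)$ disagree infinitely often. Then either $s_N<t_N$ or $s_N>r_N$, so we can form a new admissible sequence $(s_n')$ by setting $s_n'=s_n$ for $n\neq N$ and adjusting $s_N$ by $\pm 1$. Since $(s_n)$ and $(s_n')$ agree through index $m$, Proposition~\ref{prop-gr-difference} ensures that the growth rate $\gamma'$ of $1/(1-\sum s_n' x^n)$ satisfies $|\gamma-\gamma'|<\epsilon$. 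Furthermore, $\gamma\neq\gamma'$: by Proposition~\ref{prop-gr-seq}, $\gamma$ and $\gamma'$ are the unique positive solutions of $\sum s_n x^{-n}=1$ and $\sum s_n' x^{-n}=1$ respectively, and because altering a single coefficient strictly changes the value of $\sum s_n x^{-n}$ at every positive $x$ while the sum is strictly decreasing in $x$, the two solutions must be distinct. Thus arbitrarily close to $\gamma$ we have found another element of $S$.

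The main conceptual point is the observation that changing even one coefficient $s_N$ strictly perturbs the growth rate, which is what prevents the construction from producing the same $\gamma$ back; apart from that, both halves follow directly from Propositions~\ref{prop-gr-seq} and~\ref{prop-gr-difference} together with a routine compactness argument. No additional control on the sequences beyond their bounded range and the infinitely-many-disagreements hypothesis is needed, which is why the conclusion is only that $S$ is perfect (rather than an interval, as in Proposition~\ref{prop-gr-intervals}).
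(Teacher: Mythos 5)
Your proof is correct and follows essentially the same strategy as the paper: closedness via a diagonal-extraction argument combined with Proposition~\ref{prop-gr-difference}, and density-in-itself by perturbing a single coefficient at an index beyond $m(\epsilon,c)$ where $r_n\neq t_n$. You supply more detail than the paper's terse two-paragraph proof---in particular your explicit check that the perturbed growth rate really differs, using the strict monotonicity of $\sum s_n x^{-n}$ from Proposition~\ref{prop-gr-seq}, is a step the paper leaves implicit.
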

\begin{proof}
We need to prove that the set of accumulation points of $S$ is precisely $S$.  That every point of $S$ is an accumulation point follows from Proposition~\ref{prop-gr-difference} and our hypothesis that $r_n\neq t_n$ infinitely often.

For the other direction, suppose that $\gamma$ is an accumulation point of $S$.  Thus there are sequences $\left(s_n^{(1)}\right)$, $\left(s_n^{(2)}\right)$, and so on such that the growth rates of the corresponding series converge to $\gamma$.  Since $(t_n)$ is bounded, $\left(s_n^{(1)}\right),\left(s_n^{(2)}\right),\dots$ contains an infinite subsequence with the same first $m$ entries for every $m$.  This, together with Proposition~\ref{prop-gr-difference}, implies that $\gamma$ lies in $S$.
\end{proof}

\section{Proof of the Main Theorem}\label{sec-proof-main-thm}

In order to prove that the set of growth rates of permutation classes contains an interval, and to eventually prove Theorem~\ref{thm-gr-all-large}, Proposition~\ref{prop-gr-intervals} shows that we need to construct classes with a wide range of sequences of sum indecomposable permutations.  To do so, we exhibit infinite antichains of sum indecomposable permutations (in other words, infinite sets of sum indecomposable permutations such that none is contained in another).  It follows that a class can contain any subset of these antichains, giving the flexibility we desire.  In order to introduce these antichains we need more terminology.

An {\it interval\/} of the permutation $\pi$ is a set of contiguous indices $I=[a,b]$ such that the set of values $\pi(I)=\{\pi(i):i\in I\}$ is also contiguous.  Every permutation of length $n$ has intervals of length $0$, $1$, and $n$, and permutations with no other intervals are called {\it simple\/}.  Given a permutation $\sigma\in S_m$ and nonempty permutations $\alpha_1,\dots,\alpha_m$, the {\it inflation\/} of $\sigma$ by $\alpha_1,\dots,\alpha_m$, denoted $\sigma[\alpha_1,\dots,\alpha_m]$, is the permutation obtained by replacing each entry $\sigma(i)$ of $\sigma$ with an interval that is in the same relative order as $\alpha_i$.  For example, $3142[132,21,1,123]=687\ 21\ 9\ 345$ (see Figure~\ref{fig-int-and-osc}), while $\pi\oplus\sigma=12[\pi,\sigma]$ for all $\pi$ and $\sigma$.

A simple permutation is therefore a permutation that cannot be expressed as the inflation of a shorter permutation of length greater than $1$, and conversely,

\begin{proposition}[Albert and Atkinson~\cite{albert:simple-permutat:}]\label{simple-decomp-1}
Every permutation $\pi\neq 1$ is the inflation of a unique simple permutation of length at least $2$, known as the {\it simple quotient\/} of $\pi$.
\end{proposition}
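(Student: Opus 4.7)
My plan is to prove existence and uniqueness of the simple quotient separately, supported by one overlap lemma: if $I$ and $J$ are intervals of $\pi$ with $I \cap J \neq \emptyset$, then $I \cap J$ and $I \cup J$ are also intervals of $\pi$. This follows directly from the definition of interval, since contiguity of both positions and values is preserved under intersection and union of overlapping contiguous sets.

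For existence, observe that any $\pi$ of length $n \geq 2$ admits the trivial decomposition $\pi = \pi[1, \ldots, 1]$ of length $n$, so the set of decompositions $\pi = \sigma[\alpha_1, \ldots, \alpha_m]$ with $m \geq 2$ is nonempty. I would pick one minimizing $m$ and claim that $\sigma$ must be simple. Otherwise, $\sigma$ has a nontrivial interval $J$ with $2 \leq |J| \leq m - 1$, and merging the $\alpha_j$ for $j \in J$ into a single inflation block produces a valid decomposition of length $m - |J| + 1$, which is at least $2$ but strictly less than $m$, contradicting minimality.

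For uniqueness, I would handle $\pi \in \{12 \cdots n, n \cdots 21\}$ separately (where the simple quotient is transparently $12$ or $21$) and then show that for any other $\pi$, the maximal proper intervals of $\pi$ are pairwise disjoint and partition $\{1, \ldots, n\}$ canonically. Two overlapping maximal proper intervals would, by the overlap lemma, either be nested or have union strictly larger, contradicting maximality unless the union is all of $\{1, \ldots, n\}$ --- a configuration I would rule out precisely when $\pi$ is not monotone. Given any decomposition $\pi = \sigma[\alpha_1, \ldots, \alpha_m]$ with $\sigma$ simple of length at least $2$, the position blocks $I_j$ are maximal proper intervals, since any strictly larger proper interval containing some $I_j$ would descend to a proper interval of $\sigma$, contradicting simplicity. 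Thus $\{I_j\}$ equals the canonical partition, which pins down $\sigma$ uniquely. The main obstacle I anticipate is the non-monotone dichotomy --- ruling out that two crossing maximal proper intervals can cover all of $\{1, \ldots, n\}$ when $\pi$ is not monotone --- which requires extracting that such a pair of crossing intervals would force the values of $\pi$ into a monotone pattern, contradicting the assumption.
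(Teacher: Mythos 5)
The paper does not prove this proposition; it is cited verbatim from Albert and Atkinson, so there is no ``paper's proof'' to compare against. Evaluating your argument on its own merits: the overlap lemma and the existence argument are correct and standard. The uniqueness argument, however, has a genuine gap in its dichotomy. You split into the case where $\pi$ is monotone (quotient $12$ or $21$) and the case where $\pi$ is not monotone, and in the latter you claim the maximal proper intervals are pairwise disjoint, ruling out a crossing pair covering $\{1,\dots,n\}$ by ``extracting that such a pair would force $\pi$ into a monotone pattern.'' This is false. Take $\pi=2134$, which is not monotone. Its maximal proper intervals are $\{1,2,3\}$ (values $\{1,2,3\}$) and $\{3,4\}$ (values $\{3,4\}$): they overlap at $3$ and their union is all of $\{1,2,3,4\}$. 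So the crossing configuration occurs for non-monotone $\pi$, and your planned contradiction cannot be reached.

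The correct dichotomy is not monotone versus non-monotone but sum/skew decomposable versus neither. If two distinct maximal proper intervals $I$ and $J$ overlap, then $I\cup J=\{1,\dots,n\}$ by maximality (via your overlap lemma), and a short argument on the value sets $\pi(I),\pi(J)$ (two contiguous ranges covering $\{1,\dots,n\}$ and overlapping exactly on $\pi(I\cap J)$) shows that $\pi$ must be sum decomposable or skew decomposable; the simple quotient is then forced to be $12$ or $21$ respectively, and one also needs the (easy, but separate) observation that no permutation is both sum and skew decomposable. When $\pi$ is neither, the maximal proper intervals really are pairwise disjoint and your argument goes through, pinning down a simple quotient of length at least $4$. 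So the structure of your proof is salvageable, but the monotone case analysis must be replaced by the sum/skew-decomposable case analysis, and the claim that crossing maximal intervals force monotonicity must be dropped.
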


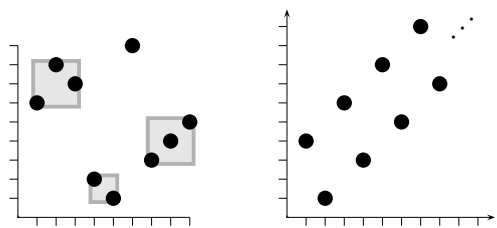
\begin{figure}
\begin{center}
\begin{tabular}{ccc}
\psset{xunit=0.01in, yunit=0.01in}
\psset{linewidth=0.005in}
\begin{pspicture}(0,0)(90,90)
\psaxes[dy=10,Dy=1,dx=10,Dx=1,tickstyle=bottom,showorigin=false,labels=none](0,0)(90,90)
\psframe[linecolor=darkgray,fillstyle=solid,fillcolor=lightgray,linewidth=0.02in](7,57)(33,83)
\psframe[linecolor=darkgray,fillstyle=solid,fillcolor=lightgray,linewidth=0.02in](37,7)(53,23)
\psframe[linecolor=darkgray,fillstyle=solid,fillcolor=lightgray,linewidth=0.02in](67,27)(93,53)
\pscircle*(10,60){0.04in}
\pscircle*(20,80){0.04in}
\pscircle*(30,70){0.04in}
\pscircle*(40,20){0.04in}
\pscircle*(50,10){0.04in}
\pscircle*(60,90){0.04in}
\pscircle*(70,30){0.04in}
\pscircle*(80,40){0.04in}
\pscircle*(90,50){0.04in}
\end{pspicture}
&\rule{10pt}{0pt}&
\psset{xunit=0.01in, yunit=0.01in}
\psset{linewidth=0.005in}
\begin{pspicture}(0,0)(110,110)
\psaxes[dy=10, Dy=1, dx=10, Dx=1,tickstyle=bottom,showorigin=false,labels=none]{->}(0,0)(109,109)
\pscircle*(10,40){4.0\psxunit}
\pscircle*(20,10){4.0\psxunit}
\pscircle*(30,60){4.0\psxunit}
\pscircle*(40,30){4.0\psxunit}
\pscircle*(50,80){4.0\psxunit}
\pscircle*(60,50){4.0\psxunit}
\pscircle*(70,100){4.0\psxunit}
\pscircle*(80,70){4.0\psxunit}
\pstextpath[c]{\psline[linecolor=white](85,98)(95,108)}{$\dots$}
\end{pspicture}
\end{tabular}
\end{center}
\caption{On the left, the plot of the permutation $687219345=3142[132,21,1,123]$.  On the right, the plot of the increasing oscillating sequence.}\label{fig-int-and-osc}
\end{figure}

We define the {\it increasing oscillating sequence\/} as the sequence
$$
4,1,6,3,8,5,\dots,2k+2,2k-1,\dots.
$$
A plot is shown in Figure~\ref{fig-int-and-osc}.  We further define an {\it increasing oscillation\/} to be any simple permutation that is contained in the increasing oscillating sequence.

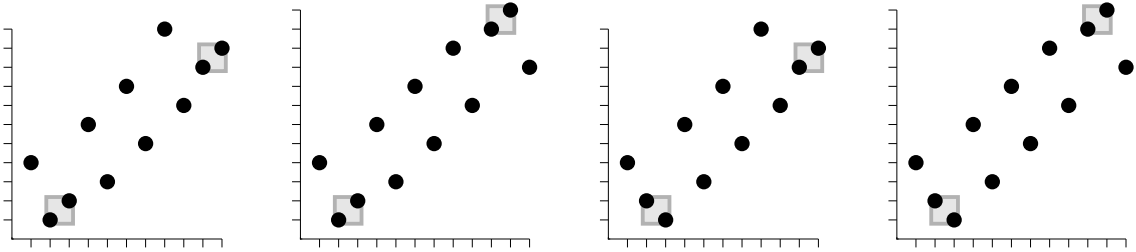
\begin{figure}
\begin{center}
\begin{tabular}{ccccccc}
\psset{xunit=0.01in, yunit=0.01in}
\psset{linewidth=0.005in}
\begin{pspicture}(0,0)(110,110)
\psaxes[dy=10,Dy=1,dx=10,Dx=1,tickstyle=bottom,showorigin=false,labels=none](0,0)(110,110)
\psframe[linecolor=darkgray,fillstyle=solid,fillcolor=lightgray,linewidth=0.02in](17,7)(33,23)
\psframe[linecolor=darkgray,fillstyle=solid,fillcolor=lightgray,linewidth=0.02in](97,87)(113,103)
\pscircle*(10,40){0.04in}
\pscircle*(20,10){0.04in}
\pscircle*(30,20){0.04in}
\pscircle*(40,60){0.04in}
\pscircle*(50,30){0.04in}
\pscircle*(60,80){0.04in}
\pscircle*(70,50){0.04in}
\pscircle*(80,110){0.04in}
\pscircle*(90,70){0.04in}
\pscircle*(100,90){0.04in}
\pscircle*(110,100){0.04in}
\end{pspicture}
&
\rule{3pt}{0pt}
&
\psset{xunit=0.01in, yunit=0.01in}
\psset{linewidth=0.005in}
\begin{pspicture}(0,0)(120,120)
\psaxes[dy=10,Dy=1,dx=10,Dx=1,tickstyle=bottom,showorigin=false,labels=none](0,0)(120,120)
\psframe[linecolor=darkgray,fillstyle=solid,fillcolor=lightgray,linewidth=0.02in](17,7)(33,23)
\psframe[linecolor=darkgray,fillstyle=solid,fillcolor=lightgray,linewidth=0.02in](97,107)(113,123)
\pscircle*(10,40){0.04in}
\pscircle*(20,10){0.04in}
\pscircle*(30,20){0.04in}
\pscircle*(40,60){0.04in}
\pscircle*(50,30){0.04in}
\pscircle*(60,80){0.04in}
\pscircle*(70,50){0.04in}
\pscircle*(80,100){0.04in}
\pscircle*(90,70){0.04in}
\pscircle*(100,110){0.04in}
\pscircle*(110,120){0.04in}
\pscircle*(120,90){0.04in}
\end{pspicture}
&
\rule{3pt}{0pt}
&
\psset{xunit=0.01in, yunit=0.01in}
\psset{linewidth=0.005in}
\begin{pspicture}(0,0)(110,110)
\psaxes[dy=10,Dy=1,dx=10,Dx=1,tickstyle=bottom,showorigin=false,labels=none](0,0)(110,110)
\psframe[linecolor=darkgray,fillstyle=solid,fillcolor=lightgray,linewidth=0.02in](17,7)(33,23)
\psframe[linecolor=darkgray,fillstyle=solid,fillcolor=lightgray,linewidth=0.02in](97,87)(113,103)
\pscircle*(10,40){0.04in}
\pscircle*(20,20){0.04in}
\pscircle*(30,10){0.04in}
\pscircle*(40,60){0.04in}
\pscircle*(50,30){0.04in}
\pscircle*(60,80){0.04in}
\pscircle*(70,50){0.04in}
\pscircle*(80,110){0.04in}
\pscircle*(90,70){0.04in}
\pscircle*(100,90){0.04in}
\pscircle*(110,100){0.04in}
\end{pspicture}
&
\rule{3pt}{0pt}
&
\psset{xunit=0.01in, yunit=0.01in}
\psset{linewidth=0.005in}
\begin{pspicture}(0,0)(120,120)
\psaxes[dy=10,Dy=1,dx=10,Dx=1,tickstyle=bottom,showorigin=false,labels=none](0,0)(120,120)
\psframe[linecolor=darkgray,fillstyle=solid,fillcolor=lightgray,linewidth=0.02in](17,7)(33,23)
\psframe[linecolor=darkgray,fillstyle=solid,fillcolor=lightgray,linewidth=0.02in](97,107)(113,123)
\pscircle*(10,40){0.04in}
\pscircle*(20,20){0.04in}
\pscircle*(30,10){0.04in}
\pscircle*(40,60){0.04in}
\pscircle*(50,30){0.04in}
\pscircle*(60,80){0.04in}
\pscircle*(70,50){0.04in}
\pscircle*(80,100){0.04in}
\pscircle*(90,70){0.04in}
\pscircle*(100,110){0.04in}
\pscircle*(110,120){0.04in}
\pscircle*(120,90){0.04in}
\end{pspicture}
\end{tabular}
\end{center}
\caption{Members of the infinite antichains $U^{12,12}$ (left) and $U^{21,12}$ (right).}\label{fig-first-antichains}
\end{figure}

We are now ready to describe the infinite antichains of sum indecomposable permutations that we use.  For each even $k\ge 4$, let $\sigma_k$ denote the increasing oscillation in the same relative order as the first $k$ entries of the increasing oscillating sequence, while for each odd $k\ge 5$ let $\sigma_k$ denote the increasing oscillation in the same relative order as the least $k$ entries of this sequence.  The elements of the antichain $U^{\alpha,\beta}$ are formed by inflating the least entry of $\sigma_k$ by $\alpha$ and the greatest entry by $\beta$ when $k$ is even, and when $k$ is odd, inflating the least entry of $\sigma_k$ by $\alpha$ and the rightmost entry by $\beta$.  Figure~\ref{fig-first-antichains} shows two examples.  We begin by observing that these are indeed antichains:

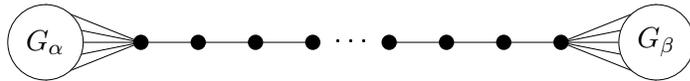
\begin{figure}
\begin{center}
\psset{xunit=0.01in, yunit=0.01in}
\psset{linewidth=0.005in}
\begin{pspicture}(0,0)(360,40)
\pscircle*(70,20){0.04in}
\pscircle*(100,20){0.04in}
\pscircle*(130,20){0.04in}
\pscircle*(160,20){0.04in}
\rput[c](181,20){$\cdots$}
\pscircle*(200,20){0.04in}
\pscircle*(230,20){0.04in}
\pscircle*(260,20){0.04in}
\pscircle*(290,20){0.04in}
\psline(290,20)(340,40)
\psline(290,20)(340,30)
\psline(290,20)(340,10)
\psline(290,20)(340,0)
\psline(200,20)(340,20)
\pscircle*[fillcolor=white,linecolor=white](340,20){20\psxunit}
\pscircle(340,20){20\psxunit}
\rput[c](340,20){$G_\beta$}
\psline(20,40)(70,20)
\psline(20,30)(70,20)
\psline(20,10)(70,20)
\psline(20,0)(70,20)
\psline(20,20)(160,20)
\pscircle*[fillcolor=white,linecolor=white](20,20){20\psxunit}
\pscircle(20,20){20\psxunit}
\rput[c](20,20){$G_\alpha$}
\end{pspicture}
\end{center}
\caption{The (unordered) graph corresponding to an element of the $U^{\alpha,\beta}$ antichain.}\label{fig-U-graphs}
\end{figure}

\begin{proposition}\label{prop-U-antichain}
For all $\alpha,\beta\neq 1$, $U^{\alpha,\beta}$ is an antichain.  Moreover, if $\alpha,\alpha',\beta,\beta'\neq 1$ and either $\alpha$ and $\alpha'$ are incomparable or $\beta$ and $\beta'$ are incomparable then $U^{\alpha,\beta}\cup U^{\alpha',\beta'}$ forms an antichain.
\end{proposition}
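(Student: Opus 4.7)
I would reduce the entire proposition to a single claim: \emph{if $u\in U^{\alpha,\beta}$ comes from $\sigma_k$ and $v\in U^{\alpha',\beta'}$ comes from $\sigma_m$ and $u\le v$, then $k=m$, $\alpha\le\alpha'$, and $\beta\le\beta'$.} Granting this, the first assertion of the proposition follows by setting $\alpha'=\alpha$ and $\beta'=\beta$: matching inflations together with $k=m$ force $u=v$. The ``moreover'' assertion follows because the claim yields $\alpha\le\alpha'$ and $\beta\le\beta'$, directly contradicting the assumed incomparability of whichever pair is incomparable (and applying the claim in the opposite direction when $v\le u$).

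To prove the claim I would analyse an embedding $\iota\colon u\hookrightarrow v$ directly. Every element of the family has a rigid shape: an $\alpha$-block in the extreme bottom-left corner, a $\beta$-block in the extreme top-right corner (for even $k$) or lower-right corner (for odd $k$), and $k-2$ singleton ``spine'' entries arranged in the increasing-oscillation pattern between them. The key step is to show that $\iota$ carries the $\alpha$-block of $u$ into the $\alpha'$-block of $v$. The spine entries of $v$ are singletons and $|\alpha|\ge 2$, so no single spine position of $v$ can absorb an entire $\alpha$-block; moreover every non-$\alpha$ entry of $u$ lies strictly above-and-to-the-right of every $\alpha$-entry, so the $\alpha$-entries of $u$ must map into the bottom-left region of $v$, which outside the spine contains only the $\alpha'$-block. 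The symmetric argument at the opposite corner carries $\beta$ into $\beta'$, giving $\alpha\le\alpha'$ and $\beta\le\beta'$.

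It remains to show that the remaining $k-2$ spine entries of $u$ can embed into the $m-2$ spine entries of $v$ only when $k=m$. This is a rigidity statement about the increasing-oscillation pattern with its two extreme entries pinned, and I would verify it by direct inspection: any strict shift of the leftmost or rightmost entry of $\sigma_k$ inside $\sigma_m$ would destroy the alignment between the $\alpha$-attachment and $\beta$-attachment sites forced by the previous paragraph. The main obstacle is exactly this final rigidity step, together with the bookkeeping needed to handle the parities of $k$ and $m$ separately, since the odd-length oscillations have a slightly different endpoint structure than the even-length ones and mixed-parity containments must be ruled out geometrically rather than by a single uniform argument.
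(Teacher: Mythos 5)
Your approach is genuinely different from the paper's, which sidesteps all geometric analysis by translating to the inversion graph $G_\pi$ (as in the footnote): $\sigma\le\pi$ forces $G_\sigma$ to be an ordered subgraph of $G_\pi$, hence an unordered subgraph, and the unordered graphs of the $U^{\alpha,\beta}$-members are paths with $G_\alpha$, $G_\beta$ attached at the two ends, which visibly form an antichain as the path length varies; the ``moreover'' clause uses the ordered versions. Your direct argument, however, has two concrete gaps.

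First, the key assertion ``every non-$\alpha$ entry of $u$ lies strictly above-and-to-the-right of every $\alpha$-entry'' is false. The least entry of $\sigma_k$ sits at \emph{position $2$}, not position $1$ (the increasing oscillating sequence begins $4,1,6,3,\dots$), so after inflation the $\alpha$-block occupies positions $2,\dots,|\alpha|+1$, and the singleton at position $1$ of $u$ lies strictly above but strictly to the \emph{left} of the whole $\alpha$-block. Consequently the conclusion you draw from it — that the $\alpha$-entries must map into ``the bottom-left region of $v$'' and hence into the $\alpha'$-block — does not follow from what you wrote. Ruling out that two or more $\alpha$-entries land on distinct low-lying spine entries of $v$ (which is what actually needs ruling out; your observation only excludes a \emph{single} spine position absorbing all of $\alpha$) requires an argument that engages with the oscillation's alternating high/low structure, and you have not supplied it.

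Second, even granting that $\alpha$ must map into $\alpha'$ and $\beta$ into $\beta'$, you explicitly leave the rigidity step — that pinning both ends forces the $k-2$ remaining spine entries to embed into the $m-2$ spine entries only when $k=m$ — as ``the main obstacle,'' to be ``verified by direct inspection.'' Since $k$ and $m$ range over all lengths (and have two parities with slightly different end behaviour, as you note), this is an infinite family of cases and a genuine proof is required, not an inspection. Until both of these are repaired the argument does not establish the proposition; the paper's graph reduction is precisely what lets one avoid this bookkeeping.
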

\begin{proof}
The proposition follows easily when translated to the equivalent fact about ordered graphs, as defined in Footnote~\ref{fn-ordered-graphs}.  Suppose that $U^{\alpha,\beta}$ failed to be an antichain.  Then there is a comparison between two of the ordered graphs corresponding to the members of $U^{\alpha,\beta}$, and thus a comparison between the unordered versions of these graphs.  These unordered graphs, depicted in Figure~\ref{fig-U-graphs}, clearly form an antichain.  Similar reasoning, but using the ordered versions of these graphs, shows that $U^{\alpha,\beta}\cup U^{\alpha',\beta'}$ is an antichain whenever $\alpha$ and $\alpha'$ are incomparable or $\beta$ and $\beta'$ are incomparable.
\end{proof}

We are now ready to establish our first interval of growth rates.

\begin{proposition}\label{prop-gr-interval1}
Every real number between $\lambda$ and the greatest positive root of $x^5-2x^4-2x^2-10x+5$ ($\approx 2.69284$) is the growth rate of a permutation class.
\end{proposition}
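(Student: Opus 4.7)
My plan is to apply Proposition~\ref{prop-gr-intervals} with $b = 3$ (which exceeds the target upper bound $\approx 2.69284$) using two sum indecomposable sequences $(r_n) \preceq (t_n)$ that arise from concrete sum closed permutation classes $\C_\bot \subseteq \C_\top$. The class $\C_\bot$ should have growth rate precisely $\lambda$; it is built from a small explicit collection of increasing oscillations together with appropriate small inflations, designed so that $\sum r_n \lambda^{-n} = 1$ via Proposition~\ref{prop-gr-seq}, with the defining polynomial $x^5-2x^4-2x^2-2x-1$ emerging once the eventual tail of $(r_n)$ is summed as a rational function. The class $\C_\top$ is obtained by adjoining to $\C_\bot$ two antichains $U^{\alpha,\beta}$ and $U^{\alpha',\beta'}$, chosen so that Proposition~\ref{prop-U-antichain} guarantees their union is again an antichain (e.g.\ with $\alpha$ and $\alpha'$ incomparable). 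Each $U$-antichain contributes one sum indecomposable element per length beyond some point, so the union contributes $t_n - r_n \geq 2$ at every length $n \geq k$ for some threshold $k$, satisfying the hypothesis of Proposition~\ref{prop-gr-intervals}.

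The second step is to verify that $\gr(1/(1-\sum t_n x^n))$ meets or exceeds the greatest positive root of $x^5-2x^4-2x^2-10x+5$. By Proposition~\ref{prop-gr-seq} this reduces to evaluating $\sum t_n x^{-n}$ at that root and checking the sum is at least $1$, which is a routine rational-function calculation once $(r_n)$ and the length sequences of $U^{\alpha,\beta}$ and $U^{\alpha',\beta'}$ are pinned down. Proposition~\ref{prop-gr-intervals} then delivers, for each $\gamma$ strictly between $\lambda$ and $\approx 2.69284$, a sequence $(s_n)$ with $(r_n) \preceq (s_n) \preceq (t_n)$ such that $\gr(1/(1-\sum s_n x^n)) = \gamma$.

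The main obstacle is realizing each such $(s_n)$ as the sum indecomposable count sequence of an actual permutation class, and this is where I expect the bulk of the combinatorial work to lie. The natural construction is to take $\C_\bot$ together with a selected subset of $U^{\alpha,\beta} \cup U^{\alpha',\beta'}$: at each length $n \geq k$ we include exactly $s_n - r_n \in \{0,1,2\}$ of the length-$n$ antichain elements. For this recipe to produce precisely the desired counts, it is essential that every strict subpermutation of every $U$-antichain element already lies in $\C_\bot$; otherwise adjoining an antichain element of length $n$ would drag further sum indecomposables of smaller lengths into the class and perturb the values $r_m$ for $m<n$. Establishing this ``subpermutation absorption'' property of $\C_\bot$, and thereby pinning down the right minimal class of growth rate $\lambda$, is the delicate step, and it is what constrains the choice of $\alpha,\beta,\alpha',\beta'$ and the explicit form of $\C_\bot$.
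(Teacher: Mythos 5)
Your high-level plan closely mirrors the paper's proof: take $A = U^{12,12}\cup U^{21,12}$ (two $U$-antichains chosen incomparably so that Proposition~\ref{prop-U-antichain} applies), let $\C_\bot$ be the class whose sum indecomposables are exactly the \emph{proper} subpermutations of the members of $A$ (so ``subpermutation absorption'' holds by construction rather than by separate verification), and realize intermediate sequences $(s_n)$ by adjoining a subset of $A$. The sequence of sum indecomposables of $\C_\bot$ is $(r_n) = 1,1,3,5,6,6,6,\dots$, which indeed yields $\lambda$ as a root of $x^5 - 2x^4 - 2x^2 - 2x - 1$ once the geometric tail is summed, matching the bottom endpoint you want.

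The gap is in the top endpoint. If $\C_\top$ is only $\C_\bot$ together with the antichains $U^{\alpha,\beta}\cup U^{\alpha',\beta'}$, then $(t_n)$ exceeds $(r_n)$ by just the two antichain elements per length, giving $(t_n) = 1,1,3,5,8,8,8,\dots$ (or similar small perturbation), whose growth rate is the greatest positive root of $x^5 - 2x^4 - 2x^2 - 2x - 3 \approx 2.52$. This is strictly short of the claimed bound $\approx 2.69284$, so Proposition~\ref{prop-gr-intervals} with your $\C_\top$ only delivers the interval $[\lambda, \approx 2.52]$. The paper closes the gap by adjoining to the class \emph{all} remaining sum indecomposable permutations of length $4$ --- there are $13$ total of length $4$, only $5$ lie below $A$, so $8$ extras are added --- bumping $t_4$ from $5$ to $13$ and changing the ceiling polynomial to $x^5 - 2x^4 - 2x^2 - 10x + 5$. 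Adding these extras is harmless because every sum indecomposable of length $\le 3$ is already forced into $\C_\bot$ (as $r_1 = r_2 = 1$, $r_3 = 3$ exhaust all of them), so the realizability argument is unaffected. Without this extra step your construction does not reach the stated bound, and the statement as quoted would not be proved.
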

\begin{proof}
Let $A=U^{12,12}\cup U^{21,12}$.  It follows from Proposition~\ref{prop-U-antichain} that $A$ is an antichain, and it is clear from inspection that $A$ contains precisely two sum indecomposable permutations of each length $n\ge 5$.  We now need to enumerate the sum indecomposable permutations which are properly contained in $A$.  For this observe that removing any point from the ``middle'' of a $U$ antichain member results in a sum decomposable permutation.  Thus the sum indecomposable permutations properly contained in $A$ are all obtained by removing points at the ends.  For $n\ge 6$ these split into six chains, each containing one element of each length, see Figure~\ref{fig-gr1-prop-contain} for the $n=8$ case.  The six chains can roughly be described as: the ``beginnings'' of $U^{12,12}$ and $U^{21,12}$ (there are two such chains), the ``endings'' of $U^{12,12}$ and $U^{21,12}$ (there are two such chains), and two increasing oscillations.

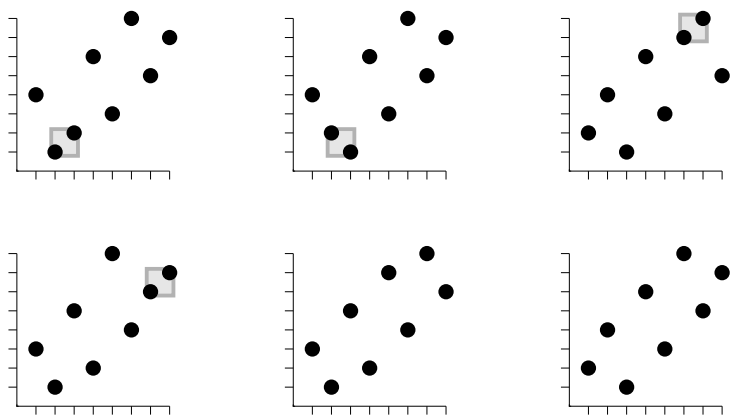
\begin{figure}
\begin{center}
\begin{tabular}{ccccc}

\psset{xunit=0.01in, yunit=0.01in}
\psset{linewidth=0.005in}
\begin{pspicture}(0,0)(80,80)
\psaxes[dy=10,Dy=1,dx=10,Dx=1,tickstyle=bottom,showorigin=false,labels=none](0,0)(80,80)
\psframe[linecolor=darkgray,fillstyle=solid,fillcolor=lightgray,linewidth=0.02in](17,7)(33,23)
\pscircle*(10,40){0.04in}
\pscircle*(20,10){0.04in}
\pscircle*(30,20){0.04in}
\pscircle*(40,60){0.04in}
\pscircle*(50,30){0.04in}
\pscircle*(60,80){0.04in}
\pscircle*(70,50){0.04in}
\pscircle*(80,70){0.04in}
\end{pspicture}

&\rule{20pt}{0pt}&

\psset{xunit=0.01in, yunit=0.01in}
\psset{linewidth=0.005in}
\begin{pspicture}(0,0)(80,80)
\psaxes[dy=10,Dy=1,dx=10,Dx=1,tickstyle=bottom,showorigin=false,labels=none](0,0)(80,80)
\psframe[linecolor=darkgray,fillstyle=solid,fillcolor=lightgray,linewidth=0.02in](17,7)(33,23)
\pscircle*(10,40){0.04in}
\pscircle*(20,20){0.04in}
\pscircle*(30,10){0.04in}
\pscircle*(40,60){0.04in}
\pscircle*(50,30){0.04in}
\pscircle*(60,80){0.04in}
\pscircle*(70,50){0.04in}
\pscircle*(80,70){0.04in}
\end{pspicture}

&\rule{20pt}{0pt}&

\psset{xunit=0.01in, yunit=0.01in}
\psset{linewidth=0.005in}
\begin{pspicture}(0,0)(80,80)
\psaxes[dy=10,Dy=1,dx=10,Dx=1,tickstyle=bottom,showorigin=false,labels=none](0,0)(80,80)
\psframe[linecolor=darkgray,fillstyle=solid,fillcolor=lightgray,linewidth=0.02in](57,67)(73,83)
\pscircle*(10,20){0.04in}
\pscircle*(20,40){0.04in}
\pscircle*(30,10){0.04in}
\pscircle*(40,60){0.04in}
\pscircle*(50,30){0.04in}
\pscircle*(60,70){0.04in}
\pscircle*(70,80){0.04in}
\pscircle*(80,50){0.04in}
\end{pspicture}

\\\\\\

\psset{xunit=0.01in, yunit=0.01in}
\psset{linewidth=0.005in}
\begin{pspicture}(0,0)(80,80)
\psaxes[dy=10,Dy=1,dx=10,Dx=1,tickstyle=bottom,showorigin=false,labels=none](0,0)(80,80)
\psframe[linecolor=darkgray,fillstyle=solid,fillcolor=lightgray,linewidth=0.02in](67,57)(83,73)
\pscircle*(10,30){0.04in}
\pscircle*(20,10){0.04in}
\pscircle*(30,50){0.04in}
\pscircle*(40,20){0.04in}
\pscircle*(50,80){0.04in}
\pscircle*(60,40){0.04in}
\pscircle*(70,60){0.04in}
\pscircle*(80,70){0.04in}
\end{pspicture}

&\rule{20pt}{0pt}&

\psset{xunit=0.01in, yunit=0.01in}
\psset{linewidth=0.005in}
\begin{pspicture}(0,0)(80,80)
\psaxes[dy=10,Dy=1,dx=10,Dx=1,tickstyle=bottom,showorigin=false,labels=none](0,0)(80,80)
\pscircle*(10,30){0.04in}
\pscircle*(20,10){0.04in}
\pscircle*(30,50){0.04in}
\pscircle*(40,20){0.04in}
\pscircle*(50,70){0.04in}
\pscircle*(60,40){0.04in}
\pscircle*(70,80){0.04in}
\pscircle*(80,60){0.04in}
\end{pspicture}

&\rule{20pt}{0pt}&

\psset{xunit=0.01in, yunit=0.01in}
\psset{linewidth=0.005in}
\begin{pspicture}(0,0)(80,80)
\psaxes[dy=10,Dy=1,dx=10,Dx=1,tickstyle=bottom,showorigin=false,labels=none](0,0)(80,80)
\pscircle*(10,20){0.04in}
\pscircle*(20,40){0.04in}
\pscircle*(30,10){0.04in}
\pscircle*(40,60){0.04in}
\pscircle*(50,30){0.04in}
\pscircle*(60,80){0.04in}
\pscircle*(70,50){0.04in}
\pscircle*(80,70){0.04in}
\end{pspicture}

\end{tabular}

\end{center}
\caption{The six sum indecomposable permutations of length $8$ properly contained in the antichain $A$ of Proposition~\ref{prop-gr-interval1}.}\label{fig-gr1-prop-contain}
\end{figure}

The smaller cases can be checked by hand, and thus the set of sum indecomposable permutations properly contained in $A$ is counted by the sequence $(r_n)=1,1,3,5,6,6,6,\dots$, while the set of sum indecomposable permutations contained (either properly or otherwise) in any member of $A$ is counted by $1,1,3,5,8,8,8,\dots$.  We now add to $A$ the $8$ sum indecomposable permutations of length $4$ that are not contained in it (there are $13$ sum indecomposable permutations of length $4$ in total), giving the infinite antichain $A'$ whose sum indecomposable permutations are enumerated by $(t_n)=1,1,3,13,8,8,8,\dots$.

Therefore, all sequences $(s_n)$ satisfying $(r_n)\preceq (s_n)\preceq (t_n)$ for
$$
\begin{array}{rcrrrrrrrrl}
(r_n)&=&1,&1,&3,&5,&6,&6,&6,&6,&\dots\mbox{ and}\\
(t_n)&=&1,&1,&3,&13,&8,&8,&8,&8,&\dots\\
\end{array}
$$
can be realized as sequences of sum indecomposable elements of a sum closed permutation class.  Proposition~\ref{prop-gr-intervals} then implies that every growth rate between
$$
\lambda=\gr\left(\frac{1}{1-\sum r_nx^n}\right)
$$
and the greatest positive root of $x^5-2x^4-2x^2-10x+5$, the growth rate for $1/(1-\sum t_n x^n)$, is the growth rate of a permutation class, as desired.
\end{proof}

\begin{proposition}\label{prop-gr-interval2}
Every real number between the greatest positive root of $x^5-2x^4-2x^2-10x+5$ ($\approx 2.69284$) and $\lambda+1$ is the growth rate of a permutation class.
\end{proposition}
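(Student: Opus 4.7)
The plan is to apply Proposition~\ref{prop-gr-intervals} with $b = \lambda+1$, a valid real choice, and with sequences $(r_n)\preceq(t_n)$ chosen so that the resulting interval is precisely $[2.69284,\lambda+1]$. Since $b-1=\lambda\in(2,3)$, the hypothesis $t_n-r_n\ge b-1$ forces $t_n-r_n\ge 3$ for all $n$ past some threshold $k$. To make the present interval meet the previous one, I would take $(r_n) = 1,1,3,13,8,8,8,\ldots$, the $(t_n)$ of Proposition~\ref{prop-gr-interval1}, realized as the sum indecomposable count of the sum closed class generated by the antichain $A'$; by Proposition~\ref{prop-gr-seq} this pins the lower growth rate $\rho$ at the root of $x^5-2x^4-2x^2-10x+5$.

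The upper sequence $(t_n) = r_n + |B_n|$ is produced by enlarging $A'$ to a bigger antichain $B$ of sum indecomposable permutations. Proposition~\ref{prop-U-antichain} allows adjoining further $U^{\alpha,\beta}$-antichains (for instance with $\alpha,\beta$ drawn from any pairwise-incomparable set of permutations), and one may also add additional sum indecomposable permutations at specified lengths using the fact that permutations of a common length are pairwise incomparable. The crucial structural feature is that the "beginnings" of the $U^{\alpha,\beta}$-chains depend only on $\alpha$, and the "endings" only on $\beta$; consequently, adding new antichain members with fresh $(\alpha,\beta)$-pairs introduces at most one new chain of proper sum indecomposable subpermutations per new $\alpha$ or $\beta$ value. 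By Proposition~\ref{prop-gr-seq}, the condition $\tau\ge\lambda+1$ reduces to the explicit inequality $\sum t_n(\lambda+1)^{-n}\ge 1$, which, given that $\sum r_n(\lambda+1)^{-n}$ falls short of $1$ by roughly $0.45$, requires enough extra antichain members at lengths where the tail $\sum_{n\ge k}(\lambda+1)^{-n}$ is not yet too small.

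The principal obstacle is combinatorial rather than analytic: one must choose the enlargement of $A'$ so that (i) the proper sum indecomposable subpermutation count remains exactly $1,1,3,13,8,8,\ldots$ and (ii) $\sum t_n(\lambda+1)^{-n}\ge 1$ holds. Condition (i) keeps $\rho$ at $2.69284$; condition (ii) forces $|B_n|$ to be large relative to $(\lambda+1)^n$ at suitable lengths. The feasibility rests on the fact that the number of sum indecomposable permutations of length $n$ grows like $n!/e^2$, which eventually dominates any bound of the form $(\lambda+1)^{n-1}\lambda$, supplying more than enough antichain candidates. Once such a $B$ is exhibited and the inequality verified, Proposition~\ref{prop-gr-intervals} yields the claim.
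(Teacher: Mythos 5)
Your plan to invoke Proposition~\ref{prop-gr-intervals} just once, keeping $(r_n)=1,1,3,13,8,8,8,\ldots$ fixed and pushing $(t_n)$ high enough that $\tau\ge\lambda+1$, runs into a structural obstruction that you do not resolve.  For an intermediate sequence $(s_n)$ to be realizable as the sum indecomposable count of a sum closed class, each ``optional'' sum indecomposable (those contributing to $t_n-r_n$) must have \emph{all} of its proper sum indecomposable subpermutations lying in the base counted by $(r_n)$; otherwise choosing that optional element without its subpermutation violates heredity.  With $(r_n)=1,1,3,13,8,8,8,\ldots$, the base at each length $n\ge 5$ consists of exactly $8$ very specific permutations (truncations of the $U$-antichains, i.e.\ near-oscillations).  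Consequently the pool of admissible optional elements at any length $n\ge 6$ is sharply constrained, and is nothing like the $n!/e^2$-sized set of all sum indecomposables.  That asymptotic count is a red herring: virtually all of those permutations contain some length-$5$ sum indecomposable outside the $8$-element base and are therefore ineligible.  A back-of-envelope computation shows that with $t_5\le 71$ one would need on the order of $400$ optional elements per length from $n=6$ onward to make $\sum t_n(\lambda+1)^{-n}\ge 1$; nothing in your construction supplies even a fraction of that while fixing $(r_n)$.

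Your own observation that adjoining $U^{\alpha,\beta}$-antichains with fresh $(\alpha,\beta)$ ``introduces at most one new chain of proper sum indecomposable subpermutations per new $\alpha$ or $\beta$'' is also at odds with condition~(i): those new chains \emph{increase} the proper-containment count, so $(r_n)$ cannot remain $1,1,3,13,8,8,8,\ldots$, and hence $\rho$ drifts away from the root of $x^5-2x^4-2x^2-10x+5$ that you need to meet the left endpoint.  The paper avoids this impasse by not attempting a single jump: it applies Proposition~\ref{prop-gr-intervals} three times, each time with $A=U^{12,12}\cup U^{12,21}\cup U^{21,12}$ but moving the ``freely chosen'' window one length further out (forcing all sum indecomposables of length $\le 4$, then $\le 5$, then $\le 6$).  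Each iteration yields an overlapping interval --- roughly $[2.693,3.030]$, $[3.024,3.411]$, $[3.410,3.795]$ --- and the union covers $[2.693,\lambda+1]$.  Your blind proof is missing this iterative idea, and the single-step version you describe cannot be made to close the gap.
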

\begin{proof}
Following the technique of the previous proposition, we set $A=U^{12,12}\cup U^{12,21}\cup U^{21,12}$.  The sequence of sum indecomposable permutations properly contained in any member of $A$ is given by $1,1,3,7,8,8,8,\dots$.  We begin by defining $A^{(1)}$ to consist of $A$ together with all sum indecomposable permutations of length at most $5$.  It follows that every sequence of sum indecomposable permutations between $(r_n)=1,1,3,13,8,8,8,8,\dots$ and $(t_n)=1,1,3,13,71,11,11,11,\dots$ can be realized as the sequence of sum indecomposable permutations in a sum closed permutation class.  This shows, via Proposition~\ref{prop-gr-intervals}, that every real number between the greatest positive root of $x^5-2x^4-2x^2-10x+5$ and $3.03024$ is the growth rate of a permutation class.

For the next construction, we let $A^{(2)}$ consist of $A$ together with all sum indecomposable permutations of length at most $6$.  This shows that every sequence of sum indecomposable permutations between $(r_n)=1,1,3,13,71,8,8,8,8,\dots$ and $(t_n)=1,1,3,13,461,11,11,11,\dots$ can be realized as the sequence of sum indecomposable permutations in a sum closed permutation class, and so every real number between $3.02440$ and $3.41108$ is the growth rate of a permutation class.  Finally, we let $A^{(3)}$ consist of $A$ together with all sum indecomposable permutations of length at most $7$, which gives (as in the previous two cases) that every real number between $3.41035$ and $3.79450>\lambda+1$ is the growth rate of a permutation class.
\end{proof}

Before proving our main theorem, we need one more notion.  Given two permutation classes $\C$ and $\D$, their {\it horizontal juxtaposition\/}, $\hjuxta{\C}{\D}$, consists of all permutations $\pi$ that can be written as a concatenation $\sigma\tau$ where $\sigma$ is in the same relative order as a permutation in $\C$ and $\tau$ is in the same relative order as a permutation in $\D$.  Proofs of the following proposition can be found in numerous places, e.g., Albert~\cite{albert:on-the-length-o:}.

\begin{proposition}\label{prop-gr-juxta}
If the permutation classes $\C$ and $\D$ both have growth rates, then the growth rate of $\hjuxta{\C}{\D}$ exists and is equal to the sum of the growth rates of $\C$ and $\D$.
\end{proposition}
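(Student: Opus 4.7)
Write $c = \gr(\C)$ and $d = \gr(\D)$ and let $h_n = |\hjuxta{\C}{\D}_n|$. My plan is to sandwich $h_n$ between binomial expressions so that the binomial theorem forces $\gr(\hjuxta{\C}{\D}) = c+d$.

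The key combinatorial observation is that, for each split point $k \in \{0, 1, \ldots, n\}$, the map sending a triple $(S, \sigma, \tau)$---where $S \subseteq \{1, \ldots, n\}$ has size $k$, $\sigma \in \C_k$, and $\tau \in \D_{n-k}$---to the permutation $\pi$ obtained by placing the values of $S$ on positions $1, \ldots, k$ with relative order $\sigma$ and the complementary values on positions $k+1, \ldots, n$ with relative order $\tau$, is an injection into $\hjuxta{\C}{\D}_n$. Since every $\pi \in \hjuxta{\C}{\D}_n$ arises this way for at least one (and at most $n+1$) choices of $k$, this yields, for each $k$, the two-sided bound
$$\binom{n}{k}|\C_k||\D_{n-k}| \;\le\; h_n \;\le\; \sum_{k=0}^{n}\binom{n}{k}|\C_k||\D_{n-k}|.$$

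For the upper bound, I fix $\epsilon > 0$ and choose $M = M(\epsilon)$ with $|\C_k| \le M(c+\epsilon)^k$ and $|\D_k| \le M(d+\epsilon)^k$ for every $k$ (possible because the respective growth rates exist). The binomial theorem then gives $h_n \le M^2(c+d+2\epsilon)^n$, so $\ugr(\hjuxta{\C}{\D}) \le c+d$ on letting $\epsilon \to 0$. For the lower bound, I fix $\epsilon > 0$, pick $N$ so that $|\C_k| \ge (c-\epsilon)^k$ and $|\D_k| \ge (d-\epsilon)^k$ for $k \ge N$, and take $k_n = \lfloor nc/(c+d) \rfloor$, which ensures $k_n, n - k_n \to \infty$. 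The single-term lower bound becomes $h_n \ge \binom{n}{k_n}(c-\epsilon)^{k_n}(d-\epsilon)^{n-k_n}$ for $n$ large, and a standard Stirling estimate shows the $n$th root of the right-hand side tends to $c + d - O(\epsilon)$ as $n \to \infty$; letting $\epsilon \to 0$ yields $\lgr(\hjuxta{\C}{\D}) \ge c+d$.

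The only place where I anticipate nontrivial care is this final asymptotic. The point is that $\alpha = c/(c+d)$ is the unique maximizer of $\alpha \mapsto c^\alpha d^{1-\alpha}/(\alpha^\alpha(1-\alpha)^{1-\alpha})$, with maximum value $c+d$, so Stirling recovers precisely the correct exponent. Should that calculation prove awkward in execution, I would dodge it using the averaging bound $h_n \ge (n+1)^{-1} \sum_k \binom{n}{k}|\C_k||\D_{n-k}|$ (which follows because $h_n$ dominates every individual term on the right) applied to the truncated sum over $k \in [N,\,n-N]$; by the binomial theorem this truncated sum is asymptotically $(c+d-2\epsilon)^n$, from which the same lower bound on $\lgr(\hjuxta{\C}{\D})$ drops out without any Stirling bookkeeping.
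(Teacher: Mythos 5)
Your proposal is correct and follows essentially the same route as the paper: both sandwich $|\hjuxta{\C}{\D}_n|$ between a single term and the full binomial convolution $\sum_k \binom{n}{k}|\C_k||\D_{n-k}|$ (equivalently, the paper's $h_n \le \sum \le (n+1)h_n$) and then extract the growth rate. The paper simply states the two-sided bound and says ``it follows''; you supply the asymptotic bookkeeping, with your Stirling argument and your averaging fallback both being standard ways to finish, the latter matching the paper's implicit reasoning most closely.
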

\begin{proof}
Every $\pi\in\hjuxta{\C}{\D}$ can be written as a concatenation $\sigma\tau$ for $\sigma\in\C$ and $\tau\in\D$ in between $1$ and $n+1$ ways, and thus
$$
\left|\hjuxta{\C}{\D}_n\right|\le \sum_{k=0}^n {n\choose k}|\C_k||\D_{n-k}|\le (n+1)\left|\hjuxta{\C}{\D}_n\right|.
$$
It follows that $\gr(\hjuxta{\C}{\D})=\gr(\C)+\gr(\D)$, as desired.
\end{proof}

The proof of Theorem~\ref{thm-gr-all-large} now follows quickly.  Propositions~\ref{prop-gr-interval1} and \ref{prop-gr-interval2} show that the set of growth rates of permutation classes contains the interval $[\lambda,\lambda+1]$.  Proposition~\ref{prop-gr-juxta} shows that $\gr\left(\hjuxta{\C}{\Av(21)}\right)=\gr(\C)+1$, so the set of growth rates of permutation classes also contains the intervals $[\lambda+1,\lambda+2]$, $[\lambda+2,\lambda+3]$, and so on.

\section{The Least Accumulation Point From Above}

As discussed in the first section, the growth rates between $\kappa$ and $\lambda$ remain a mystery.  One of the things that would be nice to pin down is the appearance of the first accumulation point from above.  Our antichains and Proposition~\ref{prop-gr-perfect} give a bound on this, improving on the bound given by Albert and Linton~\cite{albert:growing-at-a-pe:}:

\begin{proposition}
Let $\xi\approx 2.30524$ denote the unique positive root of $x^5-2x^4-x^2-x-1$ and $\zeta\approx 2.32331$ the unique positive root of $x^6-x^5-2x^4-x^3-2x^2-3x-1$.  The set of growth rates of permutation classes in the interval $[\xi,\zeta]$ contains a perfect set, and thus $\xi$ is an accumulation point from above.
\end{proposition}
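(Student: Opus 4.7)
The plan is to apply Proposition~\ref{prop-gr-perfect} to a pair of bounded sequences $(r_n) \preceq (t_n)$ extracted from one of the antichains $U^{\alpha,\beta}$. First I would fix a suitable antichain $A$ (most naturally $A = U^{12,12}$, or a closely related variant) and let $(r_n)$ count the sum indecomposable permutations of length $n$ properly contained in some member of $A$, with $(t_n) = r_n + a_n$ where $a_n$ counts the elements of $A$ of length $n$. Since $A$ is infinite, $r_n \neq t_n$ for infinitely many $n$, so the hypotheses of Proposition~\ref{prop-gr-perfect} are satisfied. For each subset $A' \subseteq A$, the sum closure of $A'$ together with the proper sub-permutations of $A$ is a sum closed permutation class whose sum indecomposable sequence is $r_n + a'_n$, where $0 \le a'_n \le a_n$; by Proposition~\ref{prop-gr-sums} this realises every integer sequence $(s_n)$ with $(r_n) \preceq (s_n) \preceq (t_n)$ as the sum indecomposable counts of an honest permutation class.

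Next I would verify the growth-rate endpoints. Following the chain analysis already used in the proof of Proposition~\ref{prop-gr-interval1}, the sum indecomposable permutations properly contained in $A$ split into finitely many chains (``beginnings'', ``endings'', and underlying oscillations), so $(r_n)$ and hence $(t_n)$ are eventually constant. Once the small-$n$ values are identified by direct inspection of the first few members of $A$ and the eventual constants are pinned down, Proposition~\ref{prop-gr-seq} expresses each growth rate as the unique positive solution of an equation of the form $\sum s_n x^{-n} = 1$; clearing denominators should yield exactly the polynomial $x^5 - 2x^4 - x^2 - x - 1$ for $(r_n)$ (giving $\xi$) and $x^6 - x^5 - 2x^4 - x^3 - 2x^2 - 3x - 1$ for $(t_n)$ (giving $\zeta$).

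Finally, Proposition~\ref{prop-gr-perfect} produces a perfect set $S$ of growth rates of series of the form $1/(1 - \sum s_n x^n)$, and the realisation step above guarantees that every element of $S$ is in fact the growth rate of a permutation class; by construction $S \subseteq [\xi, \zeta]$. Since $\xi \in S$ (realised by $(s_n) = (r_n)$) and every point of a perfect set is an accumulation point of that set, $\xi$ is an accumulation point of the set of growth rates of permutation classes, and because $\xi = \inf S$ this accumulation must occur from above. The main obstacle is combinatorial rather than analytic: correctly identifying the antichain $A$ and carefully enumerating the chains of proper sub-permutations of its members, so that the resulting generating-function polynomials match the two stated polynomials exactly.
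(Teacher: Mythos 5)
Your overall strategy matches the paper's: choose a $U$-antichain, compute the sequence $(r_n)$ counting sum indecomposables properly contained in it and a larger sequence $(t_n)$, invoke Proposition~\ref{prop-gr-perfect}, and check the endpoint polynomials via Proposition~\ref{prop-gr-seq}. So the architecture of the argument is right, and you correctly flag that the real work is the enumeration. But two specific choices are off, and they matter because the polynomials are sensitive to them.

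First, the paper takes $A$ to be only the \emph{odd-length} members of $U^{12,12}$, not all of $U^{12,12}$. Using the full antichain would make $t_n - r_n = 1$ for every $n \ge 6$, which collapses the $5,4,5,4,\dots$ alternation that produces a degree-$6$ polynomial for $\zeta$; you would instead get a different (and, one should check, larger) upper endpoint. The odd-length restriction is what makes $(t_n)$ eventually $2$-periodic and yields the quoted sextic $x^6 - x^5 - 2x^4 - x^3 - 2x^2 - 3x - 1$.

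Second, the paper's $(t_n) = 1,1,2,3,5,4,5,4,\dots$ is \emph{not} equal to $r_n + a_n$ with $a_n = |A_n|$: since the shortest member of $A$ has length $7$, that formula gives $t_5 = 4$, but the paper uses $t_5 = 5$. The extra unit at $n = 5$ comes from adjoining one additional sum indecomposable permutation of length $5$ (outside $A$) whose proper sum indecomposable subpermutations already lie among those properly contained in $A$, so the larger collection is still a downset and hence realisable via Proposition~\ref{prop-gr-sums}. Your $t_n = r_n + a_n$ would instead give a degree-$7$ polynomial for the upper endpoint. This still yields a perfect set starting at $\xi$ -- the resulting $\zeta'$ is below the stated $\zeta$, so the set lands inside $[\xi,\zeta]$ and the accumulation-from-above conclusion survives -- but it does not match the polynomial the proposition names. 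To reproduce the stated $\zeta$ exactly you must include that extra length-$5$ sum indecomposable, and you must verify that $(r_n) = 1,1,2,3,4,4,4,\dots$ by carrying out the chain count (``beginnings'', ``endings'', and increasing oscillations) analogous to the analysis in the proof of Proposition~\ref{prop-gr-interval1}.
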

\begin{proof}
Let $A$ denote the members of $U^{12,12}$ of odd length.  The set of sum indecomposable permutations properly contained in an element of $A$ is counted by the sequence $(r_n)=1,1,2,3,4,4,4,\dots$.  Thus all sequences $(s_n)$ satisfying $(r_n)\preceq (s_n)\preceq (t_n)$ for
$$
\begin{array}{rcrrrrrrrrrrrr}
(t_n)&=&1,&1,&2,&3,&5,&4,&5,&4,&5,&4,&\dots
\end{array}
$$
can be realized as sequences of sum indecomposable elements of a sum closed permutation class.  The claim then follows from Proposition~\ref{prop-gr-perfect}.
\end{proof}

We conjecture that this construction is best possible:

\begin{conjecture}
The least accumulation point from above in the set of growth rates of permutation classes is $\xi$.
\end{conjecture}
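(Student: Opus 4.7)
The plan is to apply Proposition~\ref{prop-gr-perfect} to the antichain $A$ consisting of the odd-length members of $U^{12,12}$. By Proposition~\ref{prop-U-antichain} applied with $\alpha = \beta = 12$, the family $U^{12,12}$ is itself an antichain, and so is any subset of it, in particular $A$. Since $A$ contains exactly one permutation at each sufficiently large odd length, it is infinite.

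The core task is to pin down the bounding sequences $(r_n)$ and $(t_n)$ required by Proposition~\ref{prop-gr-perfect}. Let $r_n$ count the sum indecomposable length-$n$ permutations \emph{properly} contained in some member of $A$, and let $t_n$ count those contained (properly or not) in some member of $A$. Following the reasoning used in the proof of Proposition~\ref{prop-gr-interval1}, any sum indecomposable proper subpermutation of an element of $A$ must arise from deleting entries only at the two ends, since any deletion from the interior produces a sum decomposable permutation. For odd-length $U^{12,12}$ members this decomposes the proper sum indecomposable subpermutations into four infinite chains: a ``beginning'' chain of right-end truncations, an ``ending'' chain of left-end truncations, and the two chains formed by the two increasing oscillations at each length that are contained in some odd-length $U^{12,12}$ member. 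Verifying the initial terms by hand and the four-chain stabilization for large $n$ yields $(r_n) = 1,1,2,3,4,4,4,\dots$. The sequence $(t_n)$ is then obtained from $(r_n)$ by adding $1$ at each odd $n$ in the length range of $A$, giving $(t_n) = 1,1,2,3,5,4,5,4,\dots$.

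A routine polynomial manipulation analogous to those in Section~2 then confirms that $\gr\!\left(1/(1-\sum r_n x^n)\right)$ is the positive root of $x^5 - 2x^4 - x^2 - x - 1$, namely $\xi$, while $\gr\!\left(1/(1-\sum t_n x^n)\right)$ is the positive root of $x^6 - x^5 - 2x^4 - x^3 - 2x^2 - 3x - 1$, namely $\zeta$. For every sequence $(s_n)$ with $(r_n) \preceq (s_n) \preceq (t_n)$, the sum closed class generated by the members of $A$ whose lengths $n$ satisfy $s_n = t_n$ has exactly $(s_n)$ as its sum indecomposable count. Because $r_n < t_n$ for infinitely many $n$, Proposition~\ref{prop-gr-perfect} produces a perfect set of growth rates so realized, contained in $[\xi,\zeta]$. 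Since $\xi$ is itself achieved (by $s_n = r_n$), perfectness forces growth rates strictly greater than $\xi$ to accumulate at $\xi$, completing the argument.

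The main technical obstacle is the combinatorial step of pinning down the four chains for odd-length $U^{12,12}$ members. The delicate point is that restricting to odd lengths does not eliminate the two oscillation chains: both increasing oscillations at every length still embed in some sufficiently long odd-length $U^{12,12}$ member, so each contributes a permutation at every length, not only at odd ones. Once this chain count is verified, the polynomial identifications of $\xi$ and $\zeta$ and the application of Proposition~\ref{prop-gr-perfect} follow the same template as in the earlier sections.
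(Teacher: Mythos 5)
The statement you are addressing is a \emph{conjecture} in the paper, not a theorem, and your argument does not establish it. What you have written is, in substance, the paper's proof of the proposition immediately \emph{preceding} the conjecture: taking $A$ to be the odd-length members of $U^{12,12}$, computing $(r_n)=1,1,2,3,4,4,4,\dots$ and $(t_n)=1,1,2,3,5,4,5,4,\dots$, and invoking Proposition~\ref{prop-gr-perfect} to obtain a perfect set of growth rates inside $[\xi,\zeta]$. That part is correct and matches the paper, but it only shows that $\xi$ \emph{is} an accumulation point from above of the set of growth rates. The conjecture asserts that $\xi$ is the \emph{least} such accumulation point, which additionally requires a lower bound: one must show that no real number below $\xi$ (in particular, none in $(\kappa,\xi)$) is approached from above by growth rates of permutation classes. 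Your proposal says nothing about this direction, and the paper explicitly leaves it open (``We conjecture that this construction is best possible''). The machinery of Section~2 only \emph{produces} growth rates; it can never \emph{exclude} them, so no amount of refinement of the antichain construction can close this gap. A genuine proof of minimality would demand a structural classification of all permutation classes with growth rate below $\xi$, in the spirit of the analysis below $\kappa$ in Vatter's earlier work, and that is precisely the open problem the paper records.

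A minor secondary point: your claim that the sum closed class generated by the selected antichain elements has \emph{exactly} $(s_n)$ as its sum indecomposable counting sequence should be phrased in terms of the downward closure of the chosen antichain members together with the permutations they properly contain; as stated it glosses over why no extraneous sum indecomposables appear. This is the same realization step used throughout the paper and is fixable, but it is worth saying carefully.
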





\bibliographystyle{acm}
\bibliography{../refs}

\end{document}